\pgfplotsset{compat=1.15}
\pgfplotsset{compat = newest}
\theoremstyle{definition}
\newtheorem{definition}{Definition}[section]
\theoremstyle{remark}
\newtheorem*{remark}{Remark}
\newtheorem{theorem}{Theorem}[section]
\newtheorem{proposition}[theorem]{Proposition}
\newtheorem{lemma}[theorem]{Lemma}
\newtheorem{claim}[theorem]{Claim}
\newtheorem{corollary}{Corollary}[section]
\title{A uniqueness result in the inverse problem for the anisotropic Schr\"odinger type equation from local measurements}
\author[1]{Niall Donlon}
\author[2]{Romina Gaburro}
    \affil[1]{\footnotesize Department of Mathematics and Statistics, University of Limerick, Ireland.  \newline Email: niall.donlon@ul.ie}
\affil[2]{\footnotesize Department of Mathematics and Statistics, Health Research Institute, University of Limerick, Ireland. \newline Email: romina.gaburro@ul.ie}
\date{}  
\begin{document}
\maketitle

\begin{abstract}
We consider the inverse boundary value problem of the simultaneous determination of the coefficients $\sigma$ and $q$ of the equation $-\mbox{div}(\sigma \nabla u)+qu = 0$ from knowledge of the so-called Neumann-to-Dirichlet map, given locally on a non-empty curved portion $\Sigma$ of the boundary $\partial \Omega$ of a domain $\Omega \subset \mathbb{R}^n$, with $n\geq 3$. We assume that $\sigma$ and $q$ are \textit{a-priori} known to be a piecewise constant matrix-valued and scalar function, respectively, on a given partition of $\Omega$ with curved interfaces. We prove that $\sigma$ and $q$ can be uniquely determined in $\Omega$ from the knowledge of the local map.
\end{abstract}

\textbf{Keywords}: Anisotropic Schr\"odinger equation; uniqueness; local Neumann-to-Dirichlet map.

\textbf{2010 Mathematics subject classification}: 35R30; 35J10; 35J25.
\section{Introduction}
We investigate the inverse problem of the simultaneous determination of the coefficients in a Schr\"odinger type equation 
\begin{equation}\label{eq1}
    -\mbox{div}(\sigma \nabla u) + qu =0, \qquad \text{in\quad $\Omega$,}
\end{equation}
where $\Omega\subset\mathbb{R}^n$ is a domain, with $n\geq 3$, from the knowledge of a local version of the Neumann-to-Dirichlet (N-D) map
   \[\mathcal{N}_{\sigma, q} : \sigma \nabla u \cdot \nu|_{\partial \Omega} \in H^{-\frac{1}{2}}(\partial \Omega) \longrightarrow u|_{\partial \Omega} \in H^{\frac{1}{2}}(\partial \Omega),\]
for every $u \in H^1(\Omega)$ solution to \eqref{eq1}, where $\nu$ denotes the outer unit normal to $\partial \Omega$. A rigorous definition of the N-D map and its local version are provided in Section \ref{ND map section}. We assume that $\sigma$ is a symmetric matrix-valued function satisfying the uniform ellipticity condition
\[ \lambda^{-1}|\xi|^2 \leq \sigma(x) \xi \cdot \xi \leq \lambda |\xi|^2,\quad
\text{for\:a.e.\quad $x \in \Omega$,\quad for every $\xi \in \mathbb{R}^n$} \]
and $q\geq 0$ is a scalar-valued function, with $q>0$ on a subset of $\Omega$ of positive Lebesgue measure. Moreover, given a known partition of $\Omega$, $\{D_j\}_{j=1}^N$, $\sigma$ and $q$ are both constant on each $D_j$, for $j=1,\dots , N$. 
The inverse problem addressed here is the simultaneous identification of $\sigma$ and $q$ from a local N-D map, $\mathcal{N}_{\sigma, q}$.
This model includes a large class of inverse problems. When $q=0$, $\sigma$ represents the conductivity in a conductor $\Omega$. Here the determination of $\sigma$ from the Dirichlet-to-Neumann (D-N) or, similarly, the N-D map, is the celebrated Calder\'on’s problem \cite{calderon2006inverse}. While the issue of uniqueness for Calder\'on’s problem (when $q=0$) is well studied in the isotropic case and almost considered to be settled \cite{alessandrini1990singular} \cite{kohn1984determining}  \cite{kohn1984identification} \cite{sylvester1987global} (see also \cite{caro2016global} \cite{haberman2015uniqueness} \cite{haberman2013uniqueness} for $n\geq 3$, and \cite{astala2006} \cite{brown1997uniqueness} for $n = 2$), the issue of the unique determination of an anisotropic conductivity $\sigma$ from a boundary map (D-N or N-D), is still an open problem even in the case when measurements are modeled by a global boundary map.\\
We recall Tartar's observation \cite{kohn1984identification} that any diffeomorphism $\psi$ of $\Omega$ which keeps the boundary points fixed, leaves the D-N map unchanged, while $\sigma$ is modified via its push-forward under $\psi$
\[ \psi^*\sigma := \frac{(D\psi)^T \sigma (D\psi)}{|\det (D\psi)|} \circ \psi^{-1}.\]
Hence, it has been shown that one can determine $\sigma$ up to a diffeomorphism that leaves the boundary points fixed (see \cite{Belishev2003} \cite{As-La-P2005} \cite{lassas2001determining} \cite{lassas2003dirichlet} \cite{Lee-U1989} \cite{nachman1996global} \cite{sylvester1990anisotropic} and references therein). Alternatively, assuming that the anisotropic structure of $\sigma$ is \textit{a-priori} known to depend on a finite number of spatially dependent scalar functions, one can restore stability, the continuous dependence of $\sigma$ upon the relevant map (D-N or N-D) at the boundary (\cite{A-G2001} \cite{A-G2009} \cite{G-L2009} \cite{kohn1984identification} \cite{L1997}) and in the interior (\cite{F-G-S2021} \cite{foschiatti2025recovering}) \cite{G-S2015}).\\
Here, we extend the uniqueness result obtained in \cite{Al2017} for $q=0$ to the case when $q\neq 0$ and non-negative. In \cite{Al2017}, which has a different flavor compared to the above mentioned results, the authors proved that an anisotropic conductivity that ia \textit{a-priori} known to be a piecewise constant matrix on a given partition of $\Omega$ with curved interfaces, can be uniquely determined from the knowledge of a local N-D map, localised on a curved portion $\Sigma$ of $\partial\Omega$. Our main result shows that in the case when the forward problem is \eqref{eq1}, with $q\neq 0$ and non-negative, the knowledge of the local N-D map, allows also for the simultaneous unique determination of $\sigma$ and $q$ in $\Omega$, in a setting analogous to that considered in \cite{Al2017} (Theorem \ref{Main theorem of uniqueness}). We also show that when the interfaces of the partition of $\Omega$ considered in Theorem \ref{Main theorem of uniqueness} are $C^{1, \alpha}$ boundaries of an unknown nested family of subdomains of $\Omega$, then the knowledge of the local N-D map allows to determine $\sigma$, $q$ in $\Omega$, together with the interfaces of the unknown partition of $\Omega$ (Corollary \ref{Nested domain theorem}), extending the results obtained in \cite{alessadrini2018eit} where the case $q=0$ was considered (see also \cite{Carstea2018} in the context of elastostatics with anisotropic elasticity tensor).
\\
As an application of the case $q\geq 0$ treated here, \eqref{eq1} models the propagation of light through a body $\Omega$. In this case, $\sigma$ and $q$ represent the diffusion tensor and absorption coefficients, respectively, in the inverse problem known as Diffuse Optical Tomography (DOT) \cite{applegate2020recent} \cite{arridge1999optical} \cite{arridge2009optical} \cite{curran2024singular} \cite{curran2023timeharmoic}. The problem of uniqueness in DOT for globally smooth coefficients has been studied extensively. If $\sigma$ is smooth and isotropic ($\sigma=\gamma I$, where $\gamma$ is a scalar-valued function and $I$ denotes an $n\times n$ identity matrix), one can set $v := \sqrt{\sigma }u$, to transform \eqref{eq1} into
\[ - \Delta \nu + \eta v = 0,  \]
with an \textit{effective absorption}
\[ \eta := \frac{\Delta \sqrt{\sigma}}{\sqrt{\sigma}} + \frac{q}{\sigma}. \]
If $\sigma =1$ in a neighborhood of $\partial \Omega$, then $u$ and $v$ have the same boundary values on $\partial \Omega$. Hence, the Cauchy data can only contain information about the effective absorption $\eta$ from which, generally, one cannot determine $\sigma$ and $q$, showing that, in general,  the inverse problem in DOT is not uniquely solvable \cite{arridge1998nonuniqueness} for both $\sigma$ and $q$. However, in the isotropic case, Harrach proved uniqueness when $\gamma$ and $q$ are assumed to be piecewise constant and piecewise analytic, respectively, \cite{harrach2012simultaneous} via the use of localised potentials \cite{harrach2009uniqueness}.
In \cite{foschiatti2024lipschitz}, global Lipschitz stability has been restored for a Schr\"odinger type equation in terms of the Cauchy data, for the case when $\sigma=\gamma A$, $A$ is a known $C^{1,1}$ matrix-valued function, $\gamma$ is an unknown piecewise affine scalar function, and $q$ is piecewise affine on a given partition of $\Omega$ and no sign, nor spectrum condition on $q$ is assumed (see also \cite{francini2023propagation} for the related inverse problem of estimating the size of an inclusion for \eqref{eq1} with complex discontinuous coefficients).\\
Here, we consider the inverse problem of the simultaneous unique determination of piecewise constant $\sigma$ (symmetric matrix-valued function satisfying a uniform ellipticity condition) and $q\neq0$ and non-negative in \eqref{eq1}, from a local D-N map localised on a curved $C^{1,\alpha}$ open portion $\Sigma\subset\partial\Omega$ (the precise definitions of $C^{1,\alpha}$ and curved open portion of $\partial\Omega$ are given in section \ref{sec2}). Specifically, we assume that $\Omega$ is covered by a known finite partition $\{D_j\}_{j=1}^N$ of non-overlapping connected subdomains, having interfaces that contain a portion which is curved and $C^{1,\alpha}$. $q\neq 0$ and non-negative is also assumed to be piecewise constant on the given partition. Our approach follows that in \cite{Al2017}, where the case $q=0$ was considered. Denoting by $L$ the differential operator in \eqref{eq1}, we have that 
\[{\frac{1}{\sqrt{\det g}}}L=-\Delta_g + {\frac{1}{\sqrt{\det g}}}q,\] 
where $g$ is related to $\sigma$ via $g= (\text{det}\sigma)^{\frac{1}{n-2}} \sigma^{-1}$ \cite{berger1971spectre} \cite{uhlmann2009electrical}, when $n\geq 3$, which is the case considered here. Under the \textit{a-priori} assumption that $\sigma\in C^{\alpha}(\mathcal{U})$, where $\mathcal{U}$ is a neighborhood of $\Sigma$, we can uniquely determine the tangential part of $g$ on $\Sigma$; hence, the $(n-1) \times (n-1)$ sub-matrix of $g$ from knowledge of the local N-D map. When $g$ is constant in $\mathcal{U}$, by taking advantage of the non flatness of $\Sigma$, we have sufficient information to recover $g$ entirely on $\mathcal{U}$ and hence determine $\sigma$ on $\mathcal{U}$ as well. This allows for the unique determination of $\sigma$ on the subdomain of the partition, say $D_1$, which boundary intersects $\Sigma$ on a $C^{1,\alpha}$ (and curved) portion. Once $\sigma$ is identified in $D_1$, we combine the method of singular solutions introduced in \cite{alessandrini1990singular} together with the asymptotic behavior of the Neumann Kernel of \eqref{eq1} in $\Omega$ near its pole, chosen on $\Sigma$. This allows for the unique determination of $q$ in $D_1$ from knowledge of $\sigma$ and the local N-D map. 
The proof is then completed by an induction argument and the use of the unique continuation property that allow to uniquely determine, domain by domain, both $\sigma$ and $q$ within $\Omega$.\\
The paper is organized as follows. Our main assumptions and results, Theorem \ref{Main theorem of uniqueness} and Corollary \ref{Nested domain theorem} are presented in section \ref{sec2}. The proof of Theorem \ref{Main theorem of uniqueness} is given in section \ref{sec3}. In section \ref{sec3}, we show how to uniquely determine $\sigma$ in a neighborhood $\mathcal{U}$ of $\Sigma$, where the local N-D map has been localised, in terms of the local map and our \textit{a-priori} assumptions. This allows to identify $\sigma$ in the domain $D_1$ of our partition, which boundary intersects $\Sigma$ on a (curved and $C^{1, \alpha}$) portion; in section \ref{sec4}, the so-called \textit{potential} $q$ is then uniquely determined in $D_1$ as well by knowledge of the N-D map, $\sigma$ and the \textit{a-priori} assumptions. The unique determination of $\sigma$ and $q$ within $\Omega$ from knowledge of the local N-D map is then proved in section \ref{sec5}: the main argument here is inductive and take advantage of the unique continuation property of solutions to \eqref{eq1} in our setting. An Appendix is then devoted to the proof of Corollary \ref{Nested domain theorem} and includes also a more or less trivial extension of it.


\section{Assumptions and main results}\label{sec2}

We assume throughout that $\Omega \subset \mathbb{R}^n$, $n\geq 3$ is a bounded domain with Lipschitz boundary, as per definition \ref{LipschitzBoundary} below. For $n\geq3$, a point $x\in\mathbb{R}^n$ will be denoted by $x=(x',x_n)$, where $x'\in\mathbb{R}^{n-1}$ and $x_n\in\mathbb{R}$. Moreover, given a point $x\in\mathbb{R}^n$, we denote with $B_r(x),B'_r(x)$ the open balls in $\mathbb{R}^n, \mathbb{R}^{n-1}$, respectively, centered at $x$ with radius $r$ and by $Q_r(x)$ the cylinder $B'_r(x') \times (x_n-r, x_n +r)$. We also denote $\mathbb{R}^n_+ = \{(x',x_n)\in \mathbb{R}^n | \quad x_n >0\}$, $B_r = B_r(0)$, $Q_r = Q_r(0)$ and $B_r^+ = B_r \cap \mathbb{R}^n_+$.\\
\begin{definition}\label{LipschitzBoundary}
We say that the boundary of $\Omega$, $\partial \Omega$, is of Lipschitz class 
with constants if there is $r_0>0$ such that for any $P\in\partial\Omega$, there exists a rigid transformation of coordinates under which we have $P=0$ and 
\begin{equation}
\Omega \cap Q_{r_0} = \left\{ (x', x_n) \in Q_{r_0} \; \vert \; x_n > \phi(x') \right\},
\end{equation} 
where $\phi$ is a Lipschitz function on $B_{r_0}'$ satisfying $\phi(0) = 0$. 
\end{definition}

\begin{definition}\label{function of class C^1 alpha}
 Given $\alpha \in (0,1)$, we say that an open portion $\Sigma$ of $\partial\Omega$ is of class $C^{1,\alpha}$ if for any $P\in \Sigma$ there exists a rigid transformation of coordinates under which we have $P=0$ and 
    \begin{equation}\label{phi 1}
    \Omega \cap Q_{r_0} = \{ (x', x_n)\in Q_{r_0} \:|\: x_n > \varphi (x')\},
    \end{equation}
    for some $r_0 >0$, where $\varphi$ is a $C^{1,\alpha}$ function on $B'_{r_0}$ satisfying 
    \begin{equation}\label{phi 2}
    \varphi(0) = |\nabla_{x'}\varphi(0)| = 0.
    \end{equation}
\end{definition}


\begin{definition}\label{nonflatness}
   If $\Sigma$ is a $C^{1,\alpha}$ open portion as per definition \ref{function of class C^1 alpha}, we say that $\Sigma$ is non-flat (also the function $\varphi$) if there exist three points $y_1, y_2, y_3 \in \Sigma$, such that
\begin{equation}
   \nu(y_i) \cdot \nu (y_j) < 1,\quad\textnormal{for}\quad i,j=1,2,3,\quad\textnormal{with}\quad i\neq j.
       \end{equation}
\end{definition}
Once and for all, definitions \ref{LipschitzBoundary}-\ref{nonflatness} will be adopted throughout the entire manuscript when referring to boundaries of domains that are Lipschits, $C^{1,\alpha}$ and non-flat, respectively.\\
We fix an open portion $\Sigma\subset\partial\Omega$, where our measurements are going to be localised, as follows. We assume that there is a point $y \in\partial\Omega$ and a neighbourhood $\mathcal{U}$ of $y$, such that up to a rigid transformations of coordinates,  $y=0$, and 
\begin{equation}\label{neighbourhood}
    \Sigma = \partial \Omega \cap \mathcal{U},
\end{equation}
is a non-flat portion of $\partial\Omega$ of class $C^{1,\alpha}$, for some $\alpha\in (0,1)$.


\subsection{The Neumann-to-Dirichlet map}\label{ND map section}
Let $\sigma \in L^\infty (\Omega, Sym_n)$, where $Sym_n$ denotes the class of $n \times n$ symmetric real valued matrices, satisfy the uniform ellipticity condition 
    \begin{equation}\label{ellipticity condition 0}
\lambda^{-1}|\xi|^2 \leq \sigma(x) \xi \cdot \xi \leq \lambda |\xi|^2,\quad
\text{for\:a.e.\quad $x \in \Omega$,\quad for every $\xi \in \mathbb{R}^n$,}
    \end{equation}
and $q\in L^\infty (\Omega)$ be a non-negative scalar-valued function in $\Omega$ such that $q>0$ on a subset of $\Omega$ of positive Lebesgue measure (we will refer to $q$ as the \textit{potential}). Under these general assumptions on $\sigma$ and $q$, we define the Neumann-to-Dirichlet map for the operator
    \begin{equation}\label{L}
    L= -\mbox{div}(\sigma (x) \nabla \cdot) + q(x),\quad \text{in $\Omega$}.
\end{equation} 
\begin{definition}\label{ND map}
   The Neumann-to-Dirichlet (N-D) map corresponding to $\sigma, q$ in \eqref{L},
    \[\mathcal{N}_{\sigma, q} : H^{-\frac{1}{2}}(\partial \Omega) \longrightarrow H^{\frac{1}{2}}(\partial \Omega),\]
    is the operator $\mathcal{N}_{\sigma, q}$ defined by
    \begin{equation}
        \langle \psi,  \mathcal{N}_{\sigma, q} \psi \rangle = \int_\Omega \sigma(x) \nabla u(x) \cdot \nabla u (x) + q(x)u(x) u(x) \textnormal{d}x,
    \end{equation}
    for every $\psi \in {H^{-\frac{1}{2}}(\partial \Omega) }$, where $u \in H^{1}(\Omega)$ is the weak solution to the Neumann problem
     \begin{equation}
        \Bigg\{\begin{array}{llll}
-\mbox{div}(\sigma(x)\nabla u(x)) +q(x) u(x) =0, \quad \text{in} \quad \Omega, \\
\sigma \nabla u \cdot \nu|_{\partial\Omega}=\psi, \quad \text{on} \quad \partial\Omega. \\
        \end{array}
    \end{equation}
\end{definition}
    
We set $\Delta = \partial \Omega \backslash \overline{\Sigma}$, where $\Sigma$ has been defined above, define
\[H^{\frac{1}{2}}_{co}(\Delta)= \{ f \in H^{\frac{1}{2}}(\partial \Omega)\quad\vert\quad\textnormal{supp}(f) \subset \Delta\},\]
and denote by $H^{\frac{1}{2}}_{00}(\Delta)=\overline{H^{\frac{1}{2}}_{co}(\Delta)}$, where the closure is with respect to the $H^{\frac{1}{2}}(\partial \Omega)$ norm. We also introduce
\begin{equation}
    H^{-\frac{1}{2}}(\Sigma) = \{ \psi \in   H^{-\frac{1}{2}}(\partial \Omega)\quad\vert\quad \langle \psi, f\rangle =0, \quad \text{for any $f\in H^{\frac{1}{2}}_{00}(\Delta)$} \},
\end{equation}
that is the space of distributions $\psi \in H^{-\frac{1}{2}}(\partial \Omega)$ which are supported in $\overline{\Sigma}$.
\begin{definition}
    The local N-D map corresponding to $\sigma$, $q$ in \eqref{L} and $\Sigma$ is the operator
    \[\mathcal{N}^{\Sigma}_{\sigma, q} : { H^{-\frac{1}{2}}(\Sigma)} \longrightarrow ({ H^{-\frac{1}{2}}(\Sigma)})^* \subset {H^{\frac{1}{2}}(\partial \Omega)},\]
    given by
    \begin{equation}
       \langle \mathcal{N}^{\Sigma}_{\sigma, q} \varphi, \psi \rangle = \langle \mathcal{N}_{\sigma, q} \varphi, \psi \rangle, 
    \end{equation}
    for every $\varphi, \psi \in {H^{-\frac{1}{2}}(\Sigma)}$. 
\end{definition}
 Given $\sigma^{(i)}\in L^\infty (\Omega, Sym_n)$, satisfying \eqref{ellipticity condition 0} and $q^{(i)}\in L^{\infty}(\Omega)$ non-negative scalar-valued functions that are positive on a subset of $\Omega$ of positive Lebesgue measure, for $i=1,2$, we recall Alessandrini's identity
    \begin{equation}\label{Alessandrini}
    \langle (\psi_1, (\mathcal{N}^\Sigma_{\sigma^{(2)}, q^{(2)}} - \mathcal{N}^\Sigma_{\sigma^{(1)} , q^{(1)}} )\psi_2 \rangle = \int_\Omega \big(\sigma^{(1)} - \sigma^{(2)}\big)\nabla u_1 \cdot \nabla u_2  + (q^{(1)}  - q^{(2)})u_1\: u_2,
\end{equation}
for any $\psi_i \in {H^{-\frac{1}{2}}(\Sigma)}$ and $u_i \in H^1(\Omega)$ being the weak solution to the Neumann problem
     \begin{equation}
        \Bigg\{\begin{array}{llll}
-\mbox{div}(\sigma^{(i)}(x)\nabla u_i(x)) +q^{(i)}(x) u_i(x) =0, \quad \text{in} \quad \Omega, \\
\sigma^{(i)} \nabla u_i \cdot \nu|_{\partial\Omega}=\psi_i, \quad \text{on} \quad \partial\Omega, \\
        \end{array}
    \end{equation}
for $i=1,2$. 


\subsection{Unique determination of $\sigma$ and $q$ in $\Omega$.}\label{sec general partition}
We consider a partition of $\Omega$, $\{D_j\}^N_{j=1}$, with $N\in (\mathbb{N} \backslash\{0\})$, such that 
        \[\overline{\Omega} = \bigcup^N_{j=1} \overline{D}_j,\]
        where $D_j$, $j=1,\dots,N$ are known open sets of $\mathbb{R}^n$,  satisfying the following properties (see Figure 1):
    
    \begin{enumerate}
        \item $\;$ $D_j$, $j=1,\dots,N$ are connected, pairwise non overlapping and the boundary of $\partial D_j$ is of Lipschitz class, for $j=1,\dots,N$. 
        \item $\;$ There is one domain, $D_1$ such that $\partial D_1 \cap \Sigma \neq \emptyset$  contains a non-flat $C^{1,\alpha}$ portion $\Sigma_1$.
        \item$\;$  For every $i\in\{2,\dots,N\}$, there exits $j_1,\dots, j_K \in \{1,\dots, N\}$ such that
        \[D_{j_1} = D_1, \quad D_{j_K} = D_i,\]
        and such that
        \[\Bigg( \bigcup^K_{k=1} \overline{D}_{j_k}\Bigg)^\circ \quad \text{and} \quad \Omega \backslash \Bigg( \bigcup^K_{k=1} \overline{D}_{j_k}\Bigg)\]
        are Lipschitz domains. 
        We also assume that for every $k=2,\dots,K$, $\partial D_{j_k} \cap \partial D_{j_{k-1}}$ contains a non-flat $C^{1,\alpha}$ portion $\Sigma_k$ such that $\Sigma_k \subset \Omega$. Moreover, we assume there exists $P_k \in \Sigma_k$, for every $k =2, \dots K$ and a rigid transformation of coordinates under which we have that $P_k =0$ and 
        \begin{equation}
            \begin{split}
                \Sigma_k \cap Q_{\frac{r_0}{3}} & = \{ x \in Q_{\frac{r_0}{3}} | \; x_n = \varphi_k(x')\},\\
                 D_{j_k} \cap Q_{\frac{r_0}{3}} & = \{ x \in Q_{\frac{r_0}{3}} | \; x_n > \varphi_k(x')\},\\
                  D_{j_{k-1}} \cap Q_{\frac{r_0}{3}} & = \{ x \in Q_{\frac{r_0}{3}} | \; x_n < \varphi_k(x')\},
            \end{split}
        \end{equation}
        where $\varphi_k$ in a non-flat $C^{1, \alpha}$ function on $B'_{r_0}$ satisfying $\varphi_k(0) = |\nabla \varphi_k(0)| = 0$.
    \end{enumerate}


   

\begin{figure}[!h] 
\centering
\begin{tikzpicture}[x=0.6pt,y=0.6pt,yscale=-.45,xscale=.45]

\draw [color={rgb, 255:red, 0; green, 0; blue, 0 }  ,draw opacity=1 ]   (438.98,124.68) .. controls (462.04,140.31) and (645.65,99.51) .. (601.36,351.28) .. controls (557.08,603.05) and (98.52,550.09) .. (61.61,342.59) .. controls (24.7,135.1) and (185.25,133.37) .. (208.31,121.21) ;
\draw    (217,172.5) .. controls (253.91,146.45) and (398.38,150.73) .. (432.52,168.96) ;
\draw    (227.23,224.09) .. controls (264.13,198.05) and (394.69,200.22) .. (428.83,218.45) ;
\draw    (235.53,267.5) .. controls (272.44,241.45) and (401.61,248.4) .. (423.29,266.2) ;
\draw [color={rgb, 255:red, 0; green, 0; blue, 0 }  ,draw opacity=1 ][line width=0.75]    (271,466.5) .. controls (299.92,455.95) and (365,456.5) .. (399,467.5) ;
\draw  [color={rgb, 255:red, 255; green, 255; blue, 255 }  ,draw opacity=1 ][fill={rgb, 255:red, 255; green, 255; blue, 255 }  ,fill opacity=1 ] (376.14,406.22) .. controls (363.03,405.1) and (342.29,404.38) .. (318.98,404.38) .. controls (303.13,404.38) and (288.51,404.71) .. (276.68,405.28) -- (319.16,409.01) -- cycle ;
\draw [color={rgb, 255:red, 208; green, 2; blue, 27 }  ,draw opacity=1 ]   (208.31,121.21) .. controls (277.51,73.46) and (336.56,43.08) .. (438.98,124.68) ;
\draw [color={rgb, 255:red, 0; green, 0; blue, 0 }  ,draw opacity=1 ][line width=0.75]    (254.57,374.85) .. controls (283.48,364.3) and (368.6,361.2) .. (408.6,377.6) ;
\draw    (208.31,121.21) -- (271,466.5) ;
\draw    (438.98,124.68) -- (399,467.5) ;

\draw (235.82,25.82) node [anchor=north west][inner sep=0.75pt]  [color={rgb, 255:red, 208; green, 2; blue, 27 }  ,opacity=1 ]  {$\Sigma _{1} = \Sigma$};
\draw  [color={rgb, 255:red, 255; green, 255; blue, 255 }  ,draw opacity=1 ][fill={rgb, 255:red, 255; green, 255; blue, 255 }  ,fill opacity=1 ]  (104.23,362.48) -- (147.23,362.48) -- (147.23,416.48) -- (104.23,416.48) -- cycle  ;
\draw (115.23,366.88) node [anchor=north west][inner sep=0.75pt]   {$\Omega $};
\draw (40.38,90.9) node [anchor=north west][inner sep=0.75pt]    {$\partial \Omega $};
\draw (305,105.44) node [anchor=north west][inner sep=0.75pt]    {$\textcolor[rgb]{0.81,0.75,0}{D}\textcolor[rgb]{0.81,0.75,0}{_{1}}$};
\draw (305,164.11) node [anchor=north west][inner sep=0.75pt]    {$\textcolor[rgb]{0.81,0.75,0}{D}\textcolor[rgb]{0.81,0.75,0}{_{2}}$};
\draw (305,210.7) node [anchor=north west][inner sep=0.75pt]    {$\textcolor[rgb]{0.81,0.75,0}{D}\textcolor[rgb]{0.81,0.75,0}{_{3}}$};
\draw (335,257.99) node [anchor=north west][inner sep=0.75pt]  [rotate=-88.83]  {$\textcolor[rgb]{0.75,0.7,0.03}{.....}$};
\draw (276.03,325.34) node [anchor=north west][inner sep=0.75pt]  [xslant=-0.02]  {$\Sigma _{K}$};
\draw (300,395.18) node [anchor=north west][inner sep=0.75pt]    {$\textcolor[rgb]{0.81,0.75,0}{D}\textcolor[rgb]{0.81,0.75,0}{_{K}}$};
\draw (515,84.4) node [anchor=north west][inner sep=0.75pt]  {$\Delta =\partial \Omega \backslash \overline{\Sigma }$};
\end{tikzpicture}
\caption{Schematic figure representing a chain of domains belonging to the partition of $\Omega$, $\{D_j\}_{j=1}^K$, connecting $D_1$ with $D_{K}$.\textcolor{white}{mmmmmmmmmmmmmmmmmmmmmmmmmmmm }}
\end{figure}
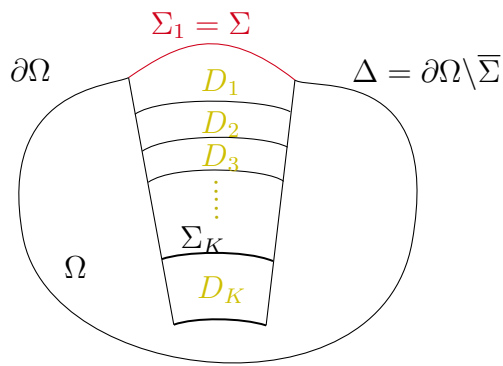
We now state our main result.
\begin{theorem}\label{Main theorem of uniqueness}
    Let $\sigma^{(i)}$ and $q^{(i)}$, for $i=1,2$, be two conductivities and potentials satisfying
     \begin{equation}\label{sigma, q result 1}
        \sigma^{(i)}(x) = \sum^N_{j=1} \sigma^{(i)}_j \chi_{D_j} (x), \qquad q^{(i)}(x) = \sum^N_{j=1} q^{(i)}_j \chi_{D_j} (x), \qquad x\in \Omega,
    \end{equation}
   respectively, where $\{D_j\}_{j=1}^N$ satisfies assumptions $(i)-(iii)$, $\sigma^{(i)}_j \in Sym_n$ is a constant matrix satisfying \eqref{ellipticity condition 0}, $q^{(i)}_j\geq 0$ is constant, for $j=1,\dots , N$ and there is $J_i\in\{1,\dots N\}$ such that $q^{(i)}_{J_i}>0$, for $i=1,2$. If 
    \[ \mathcal{N}^\Sigma_{\sigma^{(1)}, q^{(1)}} = \mathcal{N}^\Sigma_{\sigma^{(2)} , q^{(2)}},\]
    then
    \[ \sigma^{(1)} = \sigma^{(2)} \quad \textnormal{and} \quad q^{(1)}= q^{(2)}, \quad \text{in\quad $\Omega$.}\]
\end{theorem}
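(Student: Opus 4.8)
The plan is to follow the roadmap sketched in the introduction, proceeding in three stages: (i) recover $\sigma$ in a neighborhood $\mathcal{U}$ of $\Sigma$, hence in $D_1$; (ii) recover $q$ in $D_1$; (iii) propagate both determinations through the chain of subdomains by induction. For stage (i), I would first invoke the relation ${\frac{1}{\sqrt{\det g}}}L=-\Delta_g + {\frac{1}{\sqrt{\det g}}}q$ with $g=(\det\sigma)^{1/(n-2)}\sigma^{-1}$, and use the fact that the local N-D map determines the full symbol of the operator at the boundary (in particular the principal symbol, which encodes the tangential part of $g$ on $\Sigma$) by a standard boundary-determination argument à la Kohn--Vogelius: testing Alessandrini's identity \eqref{Alessandrini} with highly oscillatory solutions concentrated near a boundary point $P\in\Sigma$ forces $\sigma^{(1)}$ and $\sigma^{(2)}$ to have the same tangential components along $\Sigma$. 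Because $\sigma^{(i)}$ is constant on $D_1$ and $\Sigma_1$ is non-flat --- there are three points with linearly independent-enough normals (Definition \ref{nonflatness}) --- the collection of tangential restrictions at these points determines the constant matrix $g_1$ (hence $\sigma_1^{(1)}=\sigma_1^{(2)}=:\sigma_1$) completely, since the tangent hyperplanes span enough directions to pin down all $n(n+1)/2$ entries. The lower-order term $q/\sqrt{\det g}$ does not interfere with the principal-symbol extraction, which is the only place non-negativity and the constancy of $q$ on $D_j$ matter at this step.

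For stage (ii), with $\sigma=\sigma_1$ now known on $D_1$, I would plug $\sigma^{(1)}=\sigma^{(2)}=\sigma_1$ into Alessandrini's identity restricted to solutions supported on $\Sigma$, so that only the term $\int_\Omega (q^{(1)}-q^{(2)})u_1 u_2$ survives. The key tool is the method of singular solutions of Alessandrini \cite{alessandrini1990singular}: one constructs solutions $u_y$ of $-\mathrm{div}(\sigma\nabla u)+qu=0$ (for each of the two potentials) that behave like the Neumann kernel of \eqref{eq1} with pole $y\in\Sigma$, blowing up like $|x-y|^{2-n}$ as $x\to y$. One then needs to understand the asymptotics of the Neumann kernel near its pole --- this is where the $C^{1,\alpha}$ regularity and non-flatness of $\Sigma$ are used, to reduce locally to a half-space model and extract the leading singular behavior. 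Inserting these into the reduced Alessandrini identity and letting two poles $y,z\to P\in\Sigma_1\cap\partial D_1$, the singularity of $\int_\Omega (q^{(1)}-q^{(2)}) u_y u_z$ is governed by the value of $q^{(1)}-q^{(2)}$ near $P$; since both potentials are constant on $D_1$, comparing the coefficient of the leading singular term forces $q_1^{(1)}=q_1^{(2)}$.

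For stage (iii), once $\sigma_1$ and $q_1$ are known on $D_1$, one knows the Cauchy data of any solution on $\partial D_1$, in particular on the interface portions $\Sigma_k\subset\partial D_1$ shared with neighboring subdomains. By the unique continuation property for \eqref{eq1} (valid since the coefficients are piecewise constant with $C^{1,\alpha}$ interfaces, so solutions are locally $H^1$ and unique continuation across and along interfaces holds), the local N-D data effectively transfers to a local N-D map on $\Sigma_2\subset\partial D_2$, and one repeats stages (i)--(ii) on $D_2$ with $\Sigma_2$ playing the role of $\Sigma$. Iterating along the chains guaranteed by assumption (iii) reaches every $D_j$, yielding $\sigma^{(1)}=\sigma^{(2)}$ and $q^{(1)}=q^{(2)}$ on all of $\Omega$. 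The main obstacle I anticipate is stage (ii): making rigorous the construction and the precise asymptotic expansion of the Neumann kernel of the anisotropic operator \eqref{eq1} near a pole lying on a curved $C^{1,\alpha}$ boundary portion, and showing that the contribution of $q$ to this expansion is subleading in exactly the way needed to isolate the jump $q_1^{(1)}-q_1^{(2)}$ --- controlling the error terms uniformly as the two poles coalesce is the delicate analytic core of the argument. A secondary technical point is ensuring that the ``domain-by-domain'' induction of stage (iii) is set up so that at each step the relevant interface inherits the non-flatness and $C^{1,\alpha}$ hypotheses needed to rerun stages (i)--(ii), which is precisely what the chain conditions in assumption (iii) are designed to provide.
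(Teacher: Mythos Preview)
Your three-stage architecture matches the paper's proof exactly, and the use of non-flatness to upgrade tangential determination of $g$ to full determination of the constant matrix is precisely what the paper does (its Lemma~\ref{determination of sigma near boundary}). Two points deserve comment.

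\emph{Stage~(i).} The paper does not use oscillatory test functions \`a la Kohn--Vogelius. Instead it extracts the tangential part of $g$ from the leading asymptotics of the Neumann kernel $N_{\sigma,q}^{\Omega}(x,y)$ as $x\to y\in\Sigma$ (Theorem~\ref{Neumann kernel theorem}): the leading term $(\sigma^{-1}(y)(x-y)\cdot(x-y))^{(2-n)/2}$ encodes the tangential metric, and the local map determines these asymptotics via the four-point combination $K_{\sigma,q}(x,y,w,z)$ of Remark~\ref{remark K}. Your oscillatory-solution route would also work in principle, but the Neumann-kernel approach interfaces more cleanly with purely local N--D data and reuses the same analytic machinery needed in stage~(ii).

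\emph{Stage~(ii).} Here your sketch has a gap you should be aware of. With poles on $\Sigma$ and solutions blowing up like $|x-y|^{2-n}$, the product $u_y u_z$ integrated over $\Omega$ behaves like $\int r^{3-n}\,dr$, which \emph{does not diverge} when $n=3$; the error terms then swamp the signal and you cannot isolate $q_1^{(1)}-q_1^{(2)}$. The paper's fix is twofold: it augments $\Omega$ by a small exterior cap $D_0$ attached along $\Sigma$, places the poles at $y=z=P_1+r\nu(P_1)\in D_0\setminus\overline{\Omega}$ (so the singular solutions are genuine $H^1(\Omega)$-solutions admissible in Alessandrini's identity), and crucially differentiates the Neumann kernel once in the pole variable, working with $\partial_{y_n}\tilde{N}^{(i)}(y,x)$. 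This boosts the singularity to $|x-y|^{1-n}$, so the leading integral $\int|x-y_1|^{2-2n}\,dx\sim r^{2-n}$ diverges for all $n\ge 3$ and dominates the $O(r^{2-n+\alpha})$ error terms, yielding $|q_1^{(1)}-q_1^{(2)}|\le C(r^{n-2}+r^{\alpha})\to 0$. You correctly flagged this step as the delicate analytic core; the missing ingredient is this differentiation, together with the augmented-domain device. Stage~(iii) in the paper is implemented via an explicit two-pole function $S(y,z)$ satisfying the equation in each pole variable and vanishing on $D_0\times D_0$ by the hypothesis; unique continuation then propagates $S\equiv 0$ through the chain, which is a concrete realization of the transfer-of-N--D-map idea you describe.
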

 Next, we consider the special case when the partition $\{D_j\}^N_{j=1}$, $j=1, \dots N$ of $\Omega$ is given by layers in $\Omega$ arising as set differences in a family of nested domains $\{\Omega_j\}_{j=0}^{N-1}$ invading $\Omega$. The precise formulation is given below (see also Figure 2).


\subsection{Unique determination of $\sigma$, $q$ and the layers' interfaces}
We assume that for some $N\in\mathbb{N}\setminus\{0\}$, there are subdomains of $\Omega$, $\Omega_1\dots , \Omega_N$, such that 
\begin{enumerate}
  \item$\;$ $\Omega_{N-1} \subset \subset \Omega_{N-2} \subset \subset\dots\Omega_1\subset \subset \Omega_{0}= \Omega,$  and we define the \textit{layers} to be the sets 
    \begin{equation}
        D_j =  \Omega_{j-1}\backslash\overline{\Omega}_{j},\quad\textnormal{for}\quad j =1,\dots,N-1\qquad\text{and}\qquad D_{N} = \Omega_{N-1}.
    \end{equation}
\item$\;$  $D_j$ is connected and $\partial\Omega_j$ is of class $C^{1, \alpha}$ and non-flat, for $j=1,\dots,N$. 
    \end{enumerate}

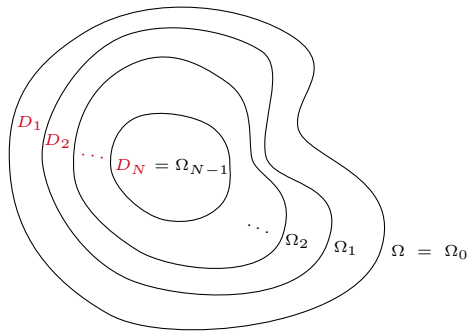
\begin{figure}[!h]
\centering
\begin{tikzpicture}[x=0.55pt,y=0.55pt,yscale=-.5,xscale=.5]

\draw   (444,51.5) .. controls (506,94.5) and (440,133.5) .. (449,175.5) .. controls (458,217.5) and (596,239.5) .. (561,344.5) .. controls (526,449.5) and (272,476.5) .. (191,430.5) .. controls (110,384.5) and (64,321.5) .. (57,239.5) .. controls (50,157.5) and (70,97.5) .. (161,42.5) .. controls (252,-12.5) and (382,8.5) .. (444,51.5) -- cycle ;
\draw   (400,70.5) .. controls (434,94.5) and (396,171.5) .. (405,213.5) .. controls (414,255.5) and (523,245.5) .. (488,335.5) .. controls (453,425.5) and (284,410.5) .. (220,383.5) .. controls (156,356.5) and (108,281.5) .. (101,223.5) .. controls (94,165.5) and (150,70.5) .. (216,47.5) .. controls (282,24.5) and (366,46.5) .. (400,70.5) -- cycle ;
\draw   (352,112.5) .. controls (391.44,140.38) and (380.28,189.27) .. (386,216.5) .. controls (391.72,243.73) and (450.26,254.43) .. (428,322.5) .. controls (405.74,390.57) and (271,368.5) .. (231,343.5) .. controls (191,318.5) and (148.19,281.33) .. (144,239.5) .. controls (139.81,197.67) and (150.12,131.16) .. (208,95.5) .. controls (265.88,59.84) and (312.56,84.62) .. (352,112.5) -- cycle ;
\draw   (331,173.5) .. controls (352.63,189.06) and (356.16,211.82) .. (356,222.5) .. controls (355.84,233.18) and (361,261.5) .. (344,280.5) .. controls (327,299.5) and (299,307.5) .. (268,301.5) .. controls (237,295.5) and (197,258.5) .. (194,229.5) .. controls (191,200.5) and (204,169.5) .. (232,159.5) .. controls (260,149.5) and (309.37,157.94) .. (331,173.5) -- cycle ;

\draw (572,332.4) node [anchor=north west][inner sep=0.75pt]    [font=\tiny]{$\Omega \ =\ \Omega _{0}$};
\draw (494,326.4) node [anchor=north west][inner sep=0.75pt]    [font=\tiny]{$\Omega _{1}$};
\draw (428,317.9) node [anchor=north west][inner sep=0.75pt]    [font=\tiny]{$\Omega _{2}$};
\draw (377.11,300.85) node [anchor=north west][inner sep=0.75pt]  [font=\tiny, rotate=-21.47]  {$\dots$};
\draw (150,205) node [anchor=north west][inner sep=0.75pt]  [font=\tiny, rotate=-10.47]  {$\textcolor[rgb]{0.82,0.01,0.11}{\dots}$};
\draw (196,215) node [anchor=north west][inner sep=0.75pt]    [font=\tiny]{{$\textcolor[rgb]{0.82,0.01,0.11}{D_{N}}$} {$=\Omega_{N-1}$}};
\draw (100,180) node [anchor=north west][inner sep=0.75pt]    [font=\tiny]{$\textcolor[rgb]{0.82,0.01,0.11}{D}\textcolor[rgb]{0.82,0.01,0.11}{_{2}}$};
\draw (62,155) node [anchor=north west][inner sep=0.75pt]    [font=\tiny]{$\textcolor[rgb]{0.82,0.01,0.11}{D}\textcolor[rgb]{0.82,0.01,0.11}{_{1}}$};
\end{tikzpicture}
\label{circle1}
\caption{The family of nested domains $\{\Omega\}_{j=0}^{N-1}$ and the layers  $\{ D\}_{j=1}^{N}$.}
\end{figure}

As a consequence of Theorem \ref{Main theorem of uniqueness}, we have the following result.


\begin{corollary}\label{Nested domain theorem}
    Let $N_i \in \mathbb{N}\backslash \{0\}$ and assume that  $\{ \Omega_j^{(i)}\}_{j=0}^{N_i-1}$,  $\{D_j^{(i)}\}_{j=1}^{N_i}$ satisfy assumptions $(i)-(ii)$, for $i=1,2$. If $\sigma^{(i)}, q^{(i)}$, for $i=1,2$ are given by
    \begin{equation}
        \sigma^{(i)}(x) = \sum^{N_i}_{j=1} \sigma^{(i)}_j \chi_{D^{(i)}_j} (x),\qquad q^{(i)}(x) = \sum^{N_i}_{j=1}  q^{(i)}_j \chi_{D^{(i)}_j} (x), \qquad x\in \Omega,
    \end{equation}
    where $\sigma^{(i)}_j \in Sym_n$ are positive definite matrices satisfying \eqref{ellipticity condition 0}, $q^{(i)}_j\geq 0$ are constant and $q_{J_i}^{(i)}>0$, for some $J_i=\{1, \dots, N_i\}$. We assume that for $i=1,2$, the following jump condition is satisfied 
    \begin{equation}
    \sigma^{(i)}_j \neq \sigma^{(i)}_{j-1}, \quad or\quad  q^{(i)}_j \neq q^{(i)}_{j-1},\quad j=1,\dots,N_i. \label{jump condition 1}
   \end{equation}
    If 
        \[ \mathcal{N}^\Sigma_{\sigma^{(1)} , q^{(1)}} = \mathcal{N}^\Sigma_{\sigma^{(2)} , q^{(2)}},\]
    then
    \begin{equation}
        N_1 = N_2 := N,
    \end{equation}
    \begin{equation}\label{Domains coincide}
        \Omega_j^{(1)} = \Omega_j^{(2)}, \qquad  \sigma_{j+1}^{(1)} = \sigma_{j+1}^{(2)},\qquad q_{j+1}^{(1)}= q_{j+1}^{(2)}, \qquad \text{for $j=0,\dots,N-1$.}
    \end{equation}
    \end{corollary}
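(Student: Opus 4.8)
The plan is to argue by induction on the number of layers, peeling off one nested subdomain at a time starting from $\Sigma$, and to reduce each step to (the proof of) Theorem \ref{Main theorem of uniqueness} together with a ``visibility'' argument for the hidden interfaces that exploits the jump condition \eqref{jump condition 1}. First note that the outermost layers $D_1^{(i)} = \Omega\setminus\overline{\Omega_1^{(i)}}$ each contain a one-sided neighbourhood in $\overline\Omega$ of the non-flat $C^{1,\alpha}$ portion $\Sigma$, on which $\sigma^{(i)}\equiv\sigma_1^{(i)}$ and $q^{(i)}\equiv q_1^{(i)}$ are constant; hence the argument carried out in Sections \ref{sec3}--\ref{sec4} for the proof of Theorem \ref{Main theorem of uniqueness} --- which only uses constancy of the coefficients near the curved portion where the map is localised --- applies and yields $\sigma_1^{(1)} = \sigma_1^{(2)} =: \sigma_1$ and $q_1^{(1)} = q_1^{(2)} =: q_1$. (If at some stage the potential of the reduced problem vanishes identically, the corresponding determination is supplied instead by the result of \cite{Al2017}.)

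Next I would propagate the agreement inward. Let $G$ be the connected component of the interior of $\{x\in\Omega : \sigma^{(1)}(x) = \sigma^{(2)}(x),\ q^{(1)}(x) = q^{(2)}(x)\}$ whose closure contains a portion of $\Sigma$; by the previous step $G$ contains a neighbourhood of $\Sigma$ in $\Omega$ and on $G$ both operators equal the constant-coefficient operator $L_0 := -\mathrm{div}(\sigma_1\nabla\,\cdot\,) + q_1$. For $\psi \in H^{-\frac12}(\Sigma)$, the solutions $u_1, u_2$ of the two Neumann problems have, by $\mathcal{N}^\Sigma_{\sigma^{(1)},q^{(1)}} = \mathcal{N}^\Sigma_{\sigma^{(2)},q^{(2)}}$, matching Cauchy data on $\Sigma$ and both solve $L_0 u = 0$ in $G$, so the unique continuation property gives $u_1 \equiv u_2$ in $G$. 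Applying this to Neumann (Green) functions $N_i(\cdot,y)$ with $y\in\Sigma$ and then using their symmetry in $(x,y)$ together with a second unique continuation in $y$, I obtain $N_1(x,y) = N_2(x,y)$ for all $x,y\in G$, $x\neq y$; in particular $N_1(\cdot,y)$ and $N_2(\cdot,y)$ have the same Cauchy data on the whole of $\partial G$ for every $y\in G$.

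The core of the proof is then to show $\Omega_1^{(1)} = \Omega_1^{(2)}$, and this is the step I expect to be the main obstacle. Suppose it fails. Since $\overline{\Omega_1^{(1)}} = \overline{\Omega_1^{(2)}}$ would force $\Omega_1^{(1)} = \Omega_1^{(2)}$, after possibly exchanging the indices there is a point $x_0 \in \partial G\cap\partial\Omega_1^{(1)}$ with $x_0 \in D_1^{(2)}$, and a ball $B_r(x_0)\subset D_1^{(2)}$, not meeting $\partial\Omega_2^{(1)}$, split by the $C^{1,\alpha}$ interface $\Gamma := B_r(x_0)\cap\partial\Omega_1^{(1)}$ into $B^+ := B_r(x_0)\cap D_1^{(1)}\subset G$ and $B^- := B_r(x_0)\cap\Omega_1^{(1)}$; on $B_r(x_0)$ one has $\sigma^{(2)}\equiv\sigma_1$, $q^{(2)}\equiv q_1$, whereas on $B^-$ one has $\sigma^{(1)}\equiv\sigma_2^{(1)}$, $q^{(1)}\equiv q_2^{(1)}$ with, by \eqref{jump condition 1}, $\sigma_2^{(1)}\neq\sigma_1$ or $q_2^{(1)}\neq q_1$. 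Now for $y\in G\setminus B_r(x_0)$ set $v := N_2(\cdot,y)$: then $v$ solves $L_0 v = 0$ throughout $B_r(x_0)$ (so it extends real-analytically across $\Gamma$), while $N_1(\cdot,y)$ coincides with $v$ on $B^+$ and solves $-\mathrm{div}(\sigma_2^{(1)}\nabla\,\cdot\,) + q_2^{(1)}\,\cdot\, = 0$ on $B^-$, with transmission across $\Gamma$ imposing $N_1(\cdot,y)|_\Gamma = v|_\Gamma$ and $\sigma_2^{(1)}\nabla N_1(\cdot,y)\cdot\nu|_\Gamma = \sigma_1\nabla v\cdot\nu|_\Gamma$. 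Letting $y$ vary and using a Runge approximation argument, $v|_{B_r(x_0)}$ exhausts a dense set of $L_0$-solutions near $x_0$, so for every such $v$ there must exist an $(\sigma_2^{(1)},q_2^{(1)})$-solution on $B^-$ with the prescribed Cauchy data on $\Gamma$; equivalently, the two one-sided local Cauchy-data sets on the non-flat $C^{1,\alpha}$ surface $\Gamma$ coincide. I would conclude by sending the pole of $v$ to $x_0$ from the $B^+$ side and comparing, through the asymptotics of the Neumann kernel near its pole developed in Section \ref{sec4}, the leading singular behaviour admissible for an $L_0$-solution against that for an $(\sigma_2^{(1)},q_2^{(1)})$-solution --- equivalently, invoking the boundary determination of $(\sigma,q)$ that underlies Theorem \ref{Main theorem of uniqueness} --- to force $\sigma_2^{(1)} = \sigma_1$ and $q_2^{(1)} = q_1$, contradicting \eqref{jump condition 1}. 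Hence $\Omega_1^{(1)} = \Omega_1^{(2)} =: \Omega_1$.

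Finally I would strip this layer and iterate. Knowing $\sigma^{(i)},q^{(i)}$ on $D_1 = \Omega\setminus\overline{\Omega_1}$, solving the now fully known forward problem in $D_1$ with data on $\Sigma$ and using unique continuation recovers the Cauchy data of $u_i$ on the interior boundary $\partial\Omega_1$, hence the local N-D map of $L_i$ on a non-flat $C^{1,\alpha}$ portion of $\partial\Omega_1$ (which is entirely $C^{1,\alpha}$ and non-flat by hypothesis). The three steps above then apply verbatim inside $\Omega_1$ to the nested families $\{D_j^{(i)}\}_{j\geq 2}$, giving $\sigma_2^{(1)} = \sigma_2^{(2)}$, $q_2^{(1)} = q_2^{(2)}$ and $\Omega_2^{(1)} = \Omega_2^{(2)}$; iterating yields $\Omega_j^{(1)} = \Omega_j^{(2)}$ and $\sigma_{j+1}^{(1)} = \sigma_{j+1}^{(2)}$, $q_{j+1}^{(1)} = q_{j+1}^{(2)}$ for all admissible $j$. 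The equality $N_1 = N_2$ follows: if, say, $N_1 < N_2$, then at the $N_1$-th stage $\sigma^{(1)}$ and $q^{(1)}$ are constant on all of $\Omega_{N_1-1} = \Omega_{N_1-1}^{(1)} = \Omega_{N_1-1}^{(2)}$ while $\partial\Omega_{N_1}^{(2)}$ is a genuine interface of the second family strictly inside it, and the argument of the core step --- with the roles of the two families interchanged --- yields a contradiction. This proves \eqref{Domains coincide}. The scheme follows that of \cite{alessadrini2018eit} for the case $q\equiv 0$; the presence of $q\geq 0$ only adds the zeroth-order terms to be tracked in the transmission conditions and in the near-pole asymptotics of the Neumann kernel.
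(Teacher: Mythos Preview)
Your proposal is correct and follows the same inductive layer-stripping strategy as the paper: determine $(\sigma_1,q_1)$ from $\mathcal{N}^\Sigma$, propagate agreement inward by unique continuation, and derive a contradiction to the jump condition \eqref{jump condition 1} whenever the next interfaces $\partial\Omega_k^{(1)}$, $\partial\Omega_k^{(2)}$ disagree.

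The difference is mainly one of packaging. The paper does not work with your set $G$ or with Runge approximation; instead it fixes the connected component $E_k$ of $\Omega\setminus\overline{\Omega_k^{(1)}\cup\Omega_k^{(2)}}$ touching $\Sigma$, chooses a Lipschitz subdomain $\mathcal{E}_k\subset E_k$ whose boundary contains both $\Sigma$ and a non-flat $C^{1,\alpha}$ portion $\Sigma_{k+1}\subset(\partial\Omega_k^{(1)}\setminus\overline{\Omega_k^{(2)}})\cap\partial E_k$, and invokes Claim \ref{claim} verbatim (with $D$ replaced by $\mathcal{E}_k$) to transfer the equality of local N-D maps from $\Sigma$ to $\Sigma_{k+1}$ on the complementary domain $\mathcal{F}_k=\Omega\setminus\overline{\mathcal{E}_k}$. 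At that point Lemma \ref{determination of sigma near boundary} and Proposition \ref{V on D1} (i.e.\ the boundary-determination part of Theorem \ref{Main theorem of uniqueness}) applied at $\Sigma_{k+1}$ immediately give $\sigma_{k+1}^{(1)}=\sigma_k^{(2)}=\sigma_k^{(1)}$ and $q_{k+1}^{(1)}=q_k^{(2)}=q_k^{(1)}$, contradicting \eqref{jump condition 1}. Your direct argument --- matching Neumann kernels on $G$, analysing transmission across $\Gamma$, and sending the pole to the interface --- reproduces exactly what Claim \ref{claim} together with the Neumann-kernel asymptotics already encode; the Runge step is superfluous, since moving the pole of $\tilde N^{(i)}$ to $x_0$ and reading off the leading term (as in Theorem \ref{Neumann kernel theorem} and Proposition \ref{V on D1}) suffices. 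The paper's route is thus more modular, reusing the machinery built for Theorem \ref{Main theorem of uniqueness} without any new local analysis; your route is more self-contained but re-derives the same asymptotic comparison by hand. The treatment of $N_1=N_2$ is essentially identical in both.
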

    
The proof of Corollary \ref{Nested domain theorem}, which relies on the arguments treated in \cite{alessadrini2018eit} in the case $q=0$, can be found in the Appendix for the sake of completeness.

\section{Proof of Main Result (Theorem \ref{Main theorem of uniqueness})}
We start with the proof of Theorem \ref{Main theorem of uniqueness}, which relies on  the determination of $\sigma \in C^\alpha(\mathcal{U})$, where $\mathcal{U}$ has been introduced in \eqref{neighbourhood}, from the knowledge of $\mathcal{N}^{\Sigma}_{\sigma , q}$. This in turn allows us to determine first $\sigma$ in \eqref{sigma, q result 1} in $D_1$, then $q$ in \eqref{sigma, q result 1} on $D_1$. An inductive argument that relies on the unique continuation of ah-hoc singular solutions of \eqref{eq1} with $\sigma, q$ as in \eqref{sigma, q result 1} within $\Omega$ allows then for the unique determination of both $\sigma$ and $q$ within each $D_j$ and hence, in $\Omega$.


\subsection{Unique determination of $\sigma$ in $\mathcal{U}$}\label{sec3}

We recall that when $\sigma \in L^\infty(\Omega)$ is a matrix-valued function satisfying the uniform ellipticity condition \eqref{ellipticity condition 0} and $q\in L^\infty (\Omega)$, $q \geq 0$,
we have the equality 
\[{\frac{1}{\sqrt{\det g}}}L=-\Delta_g + {\frac{1}{\sqrt{\det g}}}q,\]
where $L$ has been introduced in \eqref{L}, $g$ is given by
\begin{equation}
    g=(\det \sigma)^{\frac{1}{n-2}}\sigma^{-1}
\end{equation}
and $\Delta_g$ is the Laplace-Beltrami operator for the Riemannian manifold $\{\Omega,g\}$, see 
\cite{berger1971spectre}, \cite{mitrea2000potential} and \cite{uhlmann2009electrical}. If $q>0$ on a subset of $\Omega$ of positive Lebesgue measure, the Neumann kernel $N_{\sigma, q}^{\Omega}$ for $L$ in \eqref{L} and $\Omega$ is, for any $y \in \Omega$, the distributional solution to   
\begin{equation}\label{Neumannprob}
 \Bigg\{\begin{array}{llll}
       LN^{\Omega}_{\sigma, q}(\cdot, y)=\delta(\cdot -{y}), \quad &\text{in} \quad  \Omega,\\
        \sigma \nabla N^{\Omega}_{\sigma, q}(\cdot, y)\cdot \nu = 0, \quad &\text{on} \quad \partial\Omega,
        \end{array}
        \end{equation}
     which is uniquely determined and satisfies, 
        \begin{equation}
            N_{\sigma, q}^{\Omega}(x,y) = N_{\sigma, q}^{\Omega}(y,x), \qquad \text{for all $x,y\in\Omega,$ $\quad x\neq y$.}
        \end{equation}
(see \cite{kim2024neumann}). $N_{\sigma, q}^{\Omega}(x,y)$ extends continuously up to the boundary $\partial\Omega$ (for $x\neq y$), so that for $y\in\partial\Omega$, it solves
        \begin{equation}
 \Bigg\{\begin{array}{llll}
       LN^{\Omega}_{\sigma, q}(\cdot, y)=0, \quad &\text{in} \quad  \Omega,\\
        \sigma \nabla N^{\Omega}_{\sigma, q}(\cdot, y)\cdot \nu = -\delta(\cdot -{y}), \quad &\text{on} \quad \partial\Omega.
        \end{array}
        \end{equation}
As in \cite{Al2017}, under some mild local smoothness assumptions on $\sigma$ in $\mathcal{U}$, we have  the following asymptotic behaviour of $N_{\sigma, q}^{\Omega}$ near its pole $y\in\Sigma$.


    \begin{theorem}\label{Neumann kernel theorem}
        Let $y\in\partial\Omega$, $\mathcal{U}$ and $\Sigma$ be as in \eqref{neighbourhood} and assume that $\sigma$, $q$ in \eqref{L} are such that $\sigma \in C^{\alpha}(\mathcal{U}\cap \overline{\Omega})$ and $q>0$ on a subset of $\Omega$ of positive Lebesgue measure. Then we have
        \begin{equation}\label{Neumann kernel asympt}
            N_{\sigma, q}^{\Omega}(x, y) =2C_n(\det(\sigma (y)))^{-\frac{1}{2}}\Big(\sigma^{-1}(y)(x-y)\cdot(x-y)\Big)^{\frac{2-n}{2}}+ O(|x-y|^{2-n+\alpha}),
        \end{equation}
        as $x\to y$, $x\in \overline{\Omega}\backslash \{y\}$ and $C_n = \frac{1}{n(n-2)\omega_n}$, where $\omega_n$ denotes the volume of the unit ball in $\mathbb{R}^n$. 
    \end{theorem}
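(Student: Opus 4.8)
\textbf{Proof proposal for Theorem \ref{Neumann kernel theorem}.}

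The plan is to compare the Neumann kernel $N^{\Omega}_{\sigma,q}$ of the full operator $L=-\mathrm{div}(\sigma\nabla\cdot)+q$ with the Neumann kernel $N^{\Omega}_{\sigma,0}$ of the pure second-order operator $L_0=-\mathrm{div}(\sigma\nabla\cdot)$, for which the desired asymptotics \eqref{Neumann kernel asympt} is precisely the content of the analogous result in \cite{Al2017} (obtained there by freezing the coefficient $\sigma$ at $y$, applying the change of variables $g=(\det\sigma)^{1/(n-2)}\sigma^{-1}$ that turns the principal part into $\Delta_g$, and using the classical parametrix construction for the Laplace–Beltrami operator near a $C^{1,\alpha}$ boundary point together with the Schauder-type estimates that control the $C^\alpha$ perturbation of $\sigma$ away from its value at $y$). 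So the only new point is that adding the zeroth-order term $q$ — which is merely $L^\infty$ and nonnegative, with $q>0$ on a set of positive measure so that $L$ is injective on $H^1$ with the Neumann condition and the kernel is well defined and symmetric by \cite{kim2024neumann} — does not disturb the leading singularity nor the $O(|x-y|^{2-n+\alpha})$ remainder.

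First I would fix $y\in\Sigma$ and set $R(x):=N^{\Omega}_{\sigma,q}(\cdot,y)-N^{\Omega}_{\sigma,0}(\cdot,y)$. Since $L_0 N^{\Omega}_{\sigma,0}(\cdot,y)=\delta(\cdot-y)=L N^{\Omega}_{\sigma,q}(\cdot,y)=L_0 N^{\Omega}_{\sigma,q}(\cdot,y)+qN^{\Omega}_{\sigma,q}(\cdot,y)$ in $\Omega$ (with the homogeneous co-normal condition on $\partial\Omega$ in both cases), the difference $R$ solves the Neumann problem $L_0 R=-q\,N^{\Omega}_{\sigma,q}(\cdot,y)$ in $\Omega$, $\sigma\nabla R\cdot\nu=0$ on $\partial\Omega$ — but note $L_0$ alone is not invertible under Neumann conditions, so I would instead phrase this with the full operator: $L R=-q\,N^{\Omega}_{\sigma,0}(\cdot,y)$ in $\Omega$, $\sigma\nabla R\cdot\nu=0$ on $\partial\Omega$, which is uniquely solvable since $q>0$ on a set of positive measure. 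The right-hand side $f:=-q\,N^{\Omega}_{\sigma,0}(\cdot,y)$ lies in $L^p(\Omega)$ for every $p<\frac{n}{n-2}$, because near $y$ one has $|N^{\Omega}_{\sigma,0}(x,y)|\le C|x-y|^{2-n}$ from the known asymptotics and $q\in L^\infty$, and the kernel stays bounded away from $y$.

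Next I would run an elliptic regularity / potential-theoretic bootstrap to show $R$ is better behaved near $y$ than the leading term. Writing $R$ via the Neumann kernel itself, $R(x)=\int_\Omega N^{\Omega}_{\sigma,q}(x,z)\,f(z)\,dz$, and using $|N^{\Omega}_{\sigma,q}(x,z)|\le C|x-z|^{2-n}$ (the one-sided bound, which follows once the leading asymptotics is established, or can be taken as an a priori Green-function estimate for uniformly elliptic operators in Lipschitz domains), the Riesz-potential mapping properties give that $|R(x)|$ is controlled by the Riesz potential of $|f|\in L^p$ with $p$ close to $\frac{n}{n-2}$; iterating this self-improvement (each application raises the integrability exponent), after finitely many steps one reaches $f\in L^p$ with $p>n/2$, whence $R\in L^\infty_{loc}$, and then $R\in C^{0,\beta}$ locally near $y$ for some $\beta>0$ by De Giorgi–Nash–Moser; in particular $R(x)=o(|x-y|^{2-n})$, indeed $R(x)=O(1)=O(|x-y|^{2-n+\alpha})$ near $y$ (shrinking $\alpha$ if necessary, which is harmless). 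Substituting $N^{\Omega}_{\sigma,q}=N^{\Omega}_{\sigma,0}+R$ and invoking the $\sigma$-only asymptotics from \cite{Al2017} for $N^{\Omega}_{\sigma,0}$ then yields \eqref{Neumann kernel asympt}.

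The main obstacle is making the bootstrap rigorous with the limited regularity at hand: $q$ is only $L^\infty$ and nonnegative, $\partial\Omega$ is Lipschitz (only $C^{1,\alpha}$ on $\Sigma$), so one cannot simply quote smooth-coefficient parametrix results. I expect the key technical input to be a uniform pointwise Green/Neumann-function bound $|N^{\Omega}_{\sigma,q}(x,z)|\le C|x-z|^{2-n}$ valid up to the boundary for uniformly elliptic divergence-form operators with bounded zeroth-order term in Lipschitz domains — this is available in the literature (e.g. Green-function estimates à la Grüter–Widman adapted to Neumann conditions, cf.\ the references on Neumann functions such as \cite{kim2024neumann}) — together with the boundary Hölder continuity of weak solutions on the $C^{1,\alpha}$ portion $\Sigma$. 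Once that bound is in hand, the Riesz-potential iteration and the final matching with \cite{Al2017} are routine.
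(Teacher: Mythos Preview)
Your strategy is sound but differs from the paper's. The paper does not split off the $q=0$ case; instead it reruns the \cite{Al2017} parametrix argument directly for $L=-\mathrm{div}(\sigma\nabla\cdot)+q$: it flattens $\Sigma$ near $y=0$, freezes $\sigma$ at $0$, and compares $\tilde N$ to the explicit half-space Neumann kernel $N_0$ of the constant-coefficient operator $-\mathrm{div}(\sigma(0)\nabla\cdot)$. The remainder $R=\tilde N-N_0$ then satisfies a Neumann problem whose right-hand side carries \emph{both} the $(\sigma(0)-\tilde\sigma)\nabla\tilde N$ term and the new $q\tilde N$ term; representing $R$ via $N_0$ and using $|\tilde\sigma(z)-\sigma(0)|=O(|z'|^\alpha)$, $|\nabla\tilde N|\le C|z|^{1-n}$, $|\tilde N|\le C|z|^{2-n}$ (the last from \cite{kim2024neumann}) gives $|R(w)|\le C|w|^{2-n+\alpha}$ in one shot. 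Your approach is more modular---it isolates the effect of $q$ as a pure perturbation of the $q=0$ kernel---while the paper's is self-contained and avoids having to normalise $N^\Omega_{\sigma,0}$ (recall the pure Neumann problem for $L_0$ has a one-dimensional kernel, so you do need to say which $N^\Omega_{\sigma,0}$ you mean, even though the pole asymptotics is insensitive to the additive constant).

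One correction to your write-up: the bootstrap you describe is garbled and unnecessary. The source $f=-q\,N^\Omega_{\sigma,0}(\cdot,y)$ is fixed, with $|f(z)|\le C|z-y|^{2-n}$; it does not improve under iteration, and in particular never reaches $L^p$ with $p>n/2$ when $n\ge 4$. What you actually need is the single convolution bound
\[
|R(x)|\le \int_\Omega |N^\Omega_{\sigma,q}(x,z)|\,|f(z)|\,dz\le C\int_\Omega |x-z|^{2-n}|z-y|^{2-n}\,dz,
\]
which is $O(|x-y|^{4-n})$ for $n>4$, $O(\log(1/|x-y|))$ for $n=4$, and $O(1)$ for $n=3$; in every case this is $O(|x-y|^{2-n+\alpha})$ for any $\alpha\in(0,1)$. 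No iteration is required, and your claim ``$R(x)=O(1)$'' is false for $n>4$ though the final estimate survives.
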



\begin{proof}[Proof of Theorem \ref{Neumann kernel theorem}]
  $\;$ We adapt the arguments in \cite{Al2017}, \cite{mitrea2000potential}. Let $r>0$ be such that $\overline{B}_r \subset \mathcal{U}$. For any $\psi\in C^{0,1}_0(B_r)$, we have 
    \begin{equation}\label{integral on Br}
        \int_{\Omega \cap B_r} \sigma \nabla N_{\sigma, q}^{\Omega}(x, 0) \cdot \nabla \psi (x) \:\textnormal{d}x + \int_{\Omega \cap B_r} \psi (x) q(x) N_{\sigma, q}^{\Omega}(x, 0)  \:\textnormal{d}x = -\psi (0) .
    \end{equation}
Introducing the change of coordinates 
\begin{equation}
     \Bigg\{\begin{array}{llll}
         z' &=x', \\
         z_n &= x_n - \varphi (x'),
    \end{array}
\end{equation}
where $\varphi$ has been introduced in \eqref{phi 1}-\eqref{phi 2}, and setting $J= {\frac{\partial z}{\partial x}}$, we have 
\begin{equation}
    z= x + O(|x'|^{1+\alpha}),\qquad J = I + O(|x'|^{\alpha}).
\end{equation}
Defining
\begin{equation}
    \tilde{\sigma}(z) = \Big({\frac{1}{\det(J)}}J\sigma J^T \Big) (x(z))\quad\textnormal{and}\quad\tilde{N}(z)= N_{\sigma, q}^{\Omega}(x(z), 0),
\end{equation}
\eqref{integral on Br} in the new coordinates becomes
    \begin{equation}
        \int_{\{z_n>0\}} \tilde{\sigma}(z) \nabla_z \tilde{N}(z) \cdot \nabla_z \psi (x(z)) \:\textnormal{d}z +  \int_{\{z_n>0\}} \psi (z) q(z) \tilde{N}(z)\:\textnormal{d}z =- \psi (0),
    \end{equation} 
where 
\begin{equation}\label{defsigma}
    \tilde{\sigma}(z) = \sigma(0) + O(|z'|^{\alpha}).
\end{equation}
We introduce the operator
\begin{equation}\label{L bar}
L_0=-\mbox{div}(\sigma(0) \nabla \cdot),\qquad\textnormal{on}\quad \mathbb{R}^{n}_+ =\{x\in\mathbb{R}^n\quad\vert\quad x\geq 0\},
\end{equation}
and denote $\Pi_n = \{x=(x',x_n)\in \mathbb{R}^n\quad\vert\quad x_n=0\}$. For any ${y'}\in\Pi_n$, the Neumann kernel, $N_0$ for $L_0$ in \eqref{L bar} is the distributional solution to  
    \begin{equation}
      \Bigg\{\begin{array}{llll}
      L_0 N_{0}(\cdot, {y'})=0, \quad &\text{in} \quad  \mathbb{R}^{n}_+,\\
        \sigma \nabla N_{0}(\cdot, y')\cdot \nu = -\delta(\cdot -{y'}),\quad &\text{on} \quad \Pi_n,\\
        N_{0}(x,{y'}) \to 0 \quad &\text{as} \quad |x| \to \infty.
        \end{array}
        \end{equation}
For every $x\in \mathbb{R}^{n}_+$ and ${y'}\in \Pi_n$ we have
        \begin{equation}
            N_{0}(x, {y'}) = 2C_n(g_0(x-{y'})\cdot (x-{y'}))^{\frac{2-n}{2}},
        \end{equation}
        where $C_n$ is the constant introduced above (see \cite[Lemma 3.2]{Al2017}).
Thus, setting
\begin{equation}
    R(z) = \tilde{N}(z) - N_0(z,0), 
\end{equation}
we have
\begin{equation}
    \begin{split}
        \int_{\{z_n>0\}} \sigma (0)\nabla_z R(z)\cdot \nabla_z \psi (x(z)) \:\textnormal{d}z  
 =& \int_{\{z_n>0\}} ( \sigma (0) -  \tilde{\sigma}(z)) \nabla_z \tilde{N}(z) \cdot \nabla_z \psi (x(z)) \:\textnormal{d}z\\
        &-\int_{\{z_n>0\}} \psi (z) q(z) \tilde{N}(z)\:\textnormal{d}z.\\
    \end{split}
\end{equation}
    Hence for a sufficiently small $\rho>0$, we have
    \begin{equation}\label{newsystem}
    \Bigg\{ \begin{array}{llll}
           -\mbox{div}_z (\sigma (0)\nabla_z R(z)) = -\mbox{div}_z(( \sigma(0) -  \tilde{\sigma}(z)) \nabla_z \tilde{N}(z) - q(z) \tilde{N}(z), \quad &\text{in $B^{+}_\rho$} \\
         \sigma(0)\nabla_z R(z) \cdot \nu = (( \sigma(0) -  \tilde{\sigma}(z)) \nabla_z \tilde{N}(z) )\cdot \nu, 
         \quad &\text{on $B_\rho \cap \Pi_n$.}
    \end{array}
    \end{equation}
    Recalling that 
    \begin{equation}
        |N^{\Omega}_{\sigma, q}(x,0)| \leq C|x|^{2-n}, \quad \text{for every $x \in \Omega$,}
    \end{equation}
 where $C>0$ depends only on $n,\lambda, ||q||_{L^{\frac{n}{2}}(\Omega)}$ and the Lipschitz character of $\Omega$ (see \cite[Section 5]{kim2024neumann}). By the local regularity of $\sigma$, $q$ and $\Sigma$ \cite[Chapter 6]{gilbarg1977elliptic}, we also obtain
    \begin{equation}\label{Ngradestimate}
        |\nabla_x N^{\Omega}_{\sigma, q}(x,0)|  \leq C|x|^{1-n}, \quad \text{for every $x \in B_\rho \cap \Omega$.}
    \end{equation}
    Therefore, 
    \begin{equation}
        |R(z)| + |z| |\nabla_z R(z)| \leq C,  \quad \text{for every $x \in \partial B_\rho \cap \mathbb{R}^{n}_+$.}
    \end{equation}
   Hence, for every $w \in B^{+}_\rho$, we obtain
\begin{equation}
\begin{split}
    R(w) =& -\int_{\partial B^{+}_\rho}\Big( R(z) \sigma (0) \nabla_z N_0 (z,w) \cdot \nu - N_0 (z,w)\sigma (0) \nabla_z R(z)\cdot \nu\Big) \:\textnormal{d}S(z) \\
    &+\int_{B^{+}_\rho} \big(  \sigma (0) -  \tilde{\sigma}(z)\big) \nabla_z N_0(z,w) \cdot \nabla_z \tilde{N}(z) \:\textnormal{d}z -\int_{B^{+}_\rho} N_0(z,w) q(z) \tilde{N}(z)\:\textnormal{d}z\\
    &-\int_{\partial B^{+}_\rho} N_0(z,w) \big(  \sigma (0) -  \tilde{\sigma}(z)\big) \nabla_z \tilde{N}(z) \cdot \nu \:\textnormal{d}S(x).
    \end{split}
\end{equation}
Splitting $\partial B^{+}_{\rho} = ( \partial B_{\rho} \cap \mathbb{R}^{n}_+) \cup (B_\rho \cap \Pi_n),$ we get
\begin{equation}\label{defR}
\begin{split}
    R(w) =& -\int_{\partial B_\rho \cap \mathbb{R}^{n}_+} \Big(R(z) \sigma (0) \nabla_z N_0 (z,w) \cdot \nu - N_0 (z,w)\sigma (0) \nabla_z R(z)\cdot \nu \Big)\:\textnormal{d}S(z) \\
    &-\int_{\partial B_\rho \cap \mathbb{R}^n_+} N_0(z,w) \big( \sigma (0) -  \tilde{\sigma}(z)\big) \nabla_z \tilde{N}(z) \cdot \nu \:\textnormal{d}S(x)\\
     &+\int_{B^+_\rho} \big( \sigma (0) -  \tilde{\sigma}(z)\big) \nabla_z N_0(z,w) \cdot \nabla_z \tilde{N}(z) \:\textnormal{d}z -\int_{B^{+}_\rho} N_0(z,w) q(z) \tilde{N}(z)\:\textnormal{d}z. \\
    \end{split}
\end{equation}
Taking $|w|< {\frac{\rho}{2}}$, all the boundary integrals in \eqref{defR} are uniformly bounded and by \eqref{defsigma} and \eqref{Ngradestimate}, the volume integral of \eqref{defR} can be estimated as follows (see \cite[Chapter 2]{miranda2013partial})
\begin{equation}
\begin{split}
    &\Bigg| \int_{B^+_\rho }\big( \sigma (0) -  \tilde{\sigma}(z)\big) \nabla_z N_0(z,w) \cdot \nabla_z \tilde{N}(z) \:\textnormal{d}z +\int_{B^{+}_\rho} N_0(z,w) q(z) \tilde{N}(z) \:\textnormal{d}z \Bigg| 
\\
&\leq C \int_{B^+_\rho }|z'|^\alpha |z-w|^{1-n} |z|^{1-n} \:\textnormal{d}z \leq C |w|^{2-n+\alpha},
    \end{split}
\end{equation}
therefore, $|R(z)|\leq C|z|^{2-n+\alpha}$ on $B^+_\rho$ and recalling that $|z| = O(|x|)$, \eqref{Neumann kernel asympt} is proven. 
\end{proof}

\begin{lemma}\label{lemma 3.5}
    
   We assume that $y, \:\mathcal{U},\: \Sigma,\: \sigma$ and $q$ satisfy the hypotheses of Theorem \ref{Neumann kernel theorem}. Then the knowledge of $N_{\sigma,q}^\Omega(x,{y})$, for every $x\in \partial \Omega \cap \mathcal{U}$, uniquely determines
    \[ g_{(n-1)}({y}) = \{g({y})\upsilon_i \cdot \upsilon_j\}_{i,j=1,\dots,(n-1)}, \]
    where $\upsilon_1,\dots,\upsilon_{n-1}$ is a basis for the tangent space $T_{{y}}(\partial\Omega)$. 
\end{lemma}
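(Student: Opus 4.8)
I would extract $g_{(n-1)}(y)$ from the leading-order singularity of the Neumann kernel as $x\to y$ along the boundary $\partial\Omega\cap\mathcal U$, which is exactly the data we are given. By Theorem \ref{Neumann kernel theorem}, for $x\in\overline\Omega\setminus\{y\}$ near $y$ we have
\[
N_{\sigma,q}^\Omega(x,y)=2C_n\bigl(\det\sigma(y)\bigr)^{-1/2}\bigl(\sigma^{-1}(y)(x-y)\cdot(x-y)\bigr)^{\frac{2-n}{2}}+O(|x-y|^{2-n+\alpha}),
\]
and I would note that, since $g=(\det\sigma)^{\frac{1}{n-2}}\sigma^{-1}$, the scalar quadratic form governing the singularity can be rewritten as
\[
\bigl(\det\sigma(y)\bigr)^{-1/2}\bigl(\sigma^{-1}(y)\xi\cdot\xi\bigr)^{\frac{2-n}{2}}=\bigl(\det g(y)\bigr)^{-1/2}\bigl(g(y)\xi\cdot\xi\bigr)^{\frac{2-n}{2}},
\]
using $\det g=(\det\sigma)^{\frac{n}{n-2}}\det(\sigma^{-1})=(\det\sigma)^{\frac{n}{n-2}-1}=(\det\sigma)^{\frac{2}{n-2}}$, so that $(\det g)^{-1/2}=(\det\sigma)^{-\frac{1}{n-2}}$, and $g(y)\xi\cdot\xi=(\det\sigma)^{\frac{1}{n-2}}\sigma^{-1}(y)\xi\cdot\xi$; the powers combine correctly. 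Hence the leading term is an intrinsic quantity built from $g(y)$.

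**Key steps.**
First I would fix $P=y$ with the coordinates of Definition \ref{function of class C^1 alpha}, so that $T_y(\partial\Omega)=\Pi_n=\{x_n=0\}$ and the tangential directions $x-y$ with $(x-y)_n\to0$ faster than $|x-y|$ (indeed $(x-y)_n=\varphi(x')-\varphi(0)=o(|x'|)$ since $\nabla_{x'}\varphi(0)=0$; with the $C^{1,\alpha}$ bound, $(x-y)_n=O(|x'|^{1+\alpha})$). For a unit tangent vector $\upsilon\in T_y(\partial\Omega)$ I would approach $y$ along the boundary in the direction $\upsilon$, i.e. take $x(t)$ with $x'(t)-y'=t\upsilon'$ and $x_n(t)=\varphi(x'(t))$, so that $x(t)-y=t\upsilon+O(t^{1+\alpha})$. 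Substituting into the asymptotic expansion,
\[
N_{\sigma,q}^\Omega(x(t),y)=2C_n\bigl(\det g(y)\bigr)^{-1/2}\bigl(g(y)\upsilon\cdot\upsilon\bigr)^{\frac{2-n}{2}}\,t^{2-n}+O(t^{2-n+\alpha}),
\]
so that
\[
\lim_{t\to0^+}t^{\,n-2}\,N_{\sigma,q}^\Omega(x(t),y)=2C_n\bigl(\det g(y)\bigr)^{-1/2}\bigl(g(y)\upsilon\cdot\upsilon\bigr)^{\frac{2-n}{2}}.
\]
This limit is computed purely from the data. Thus I recover the quantity $\bigl(\det g(y)\bigr)^{-1/2}\bigl(g(y)\upsilon\cdot\upsilon\bigr)^{\frac{2-n}{2}}$ for every tangential direction $\upsilon$, hence $\bigl(\det g(y)\bigr)^{\frac{1}{n-2}}\bigl(g(y)\upsilon\cdot\upsilon\bigr)$ for every $\upsilon\in T_y(\partial\Omega)$. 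By polarization (replacing $\upsilon$ by $\upsilon_i+\upsilon_j$ and $\upsilon_i-\upsilon_j$), I recover the full symmetric bilinear form $\upsilon\mapsto\bigl(\det g(y)\bigr)^{\frac{1}{n-2}}\,g(y)\upsilon\cdot\upsilon$ on $T_y(\partial\Omega)$, i.e. the matrix $\bigl(\det g(y)\bigr)^{\frac{1}{n-2}}\,g_{(n-1)}(y)$ in any fixed tangential basis. It remains to remove the scalar factor $\bigl(\det g(y)\bigr)^{\frac{1}{n-2}}$: I would argue that this is determined by the recovered data too. One clean way: take the determinant of the recovered $(n-1)\times(n-1)$ matrix, which equals $\bigl(\det g(y)\bigr)^{\frac{n-1}{n-2}}\det g_{(n-1)}(y)$; but by a standard linear-algebra identity $\det g_{(n-1)}(y)=\det g(y)\cdot\bigl(g^{-1}(y)\nu\cdot\nu\bigr)$ with $\nu=\nu(y)$ the unit normal — this still involves the unknown normal component. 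The safer route, which matches the paper's logic, is simply to \emph{define} $g_{(n-1)}(y)$ as the recovered object up to this scalar and observe that the subsequent step (reconstructing the full $g$ on $\mathcal U$ using non-flatness) only needs the tangential form at several points with a consistent normalization; alternatively one fixes the normalization by also extracting the $O(|x-y|^{2-n})$ coefficient along a non-tangential sequence $x\to y$ inside $\Omega$, which gives $\sigma^{-1}(y)w\cdot w$ for a direction $w$ with a nonzero normal component, and combining with the tangential data pins down $\det\sigma(y)$ hence the scalar factor. I would present the normalization via this non-tangential approach to make the determination of $g_{(n-1)}(y)$ literal.

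**Main obstacle.**
The genuine difficulty is the normalization: the leading singularity only ever sees the conformally-invariant combination $(\det\sigma(y))^{-1/2}(\sigma^{-1}(y)\xi\cdot\xi)^{\frac{2-n}{2}}$, so approaching $y$ strictly within the tangent plane recovers $g_{(n-1)}(y)$ only up to the positive scalar $(\det\sigma(y))^{-\frac{1}{n-2}}$. Resolving this cleanly — showing that either (a) this scalar is itself determined by admissible limits of the data taken along sequences with a controlled nonzero normal component, with the $C^{1,\alpha}$ geometry giving enough room to separate the tangential and normal contributions in the error term $O(|x-y|^{2-n+\alpha})$, or (b) the scalar ambiguity is harmless for the downstream argument because it is reconstructed together with the remaining entries of $g$ from the non-flatness of $\Sigma$ — is the one place where care is needed; everything else is polarization and a limit. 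I would expect the paper to follow option (a), carefully choosing the approach direction $w=w(t)$ with $|w_n(t)|\sim c|w(t)|$ so that $\sigma^{-1}(y)w\cdot w$ is extracted with the same $t^{n-2}$-rescaled limit, after which $\det\sigma(y)$ and then $g_{(n-1)}(y)$ follow.
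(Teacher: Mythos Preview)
Your overall strategy is the right one and matches what the paper (deferring to \cite{Al2017}) does: read off the leading singular coefficient of $N_{\sigma,q}^\Omega(x,y)$ along tangential approaches $x\to y$ on $\partial\Omega\cap\mathcal U$, then polarize. However, you made an algebra slip in the key identity, and that slip is precisely what created your ``main obstacle''.

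You claim
\[
(\det\sigma(y))^{-1/2}\bigl(\sigma^{-1}(y)\xi\cdot\xi\bigr)^{\frac{2-n}{2}}
=(\det g(y))^{-1/2}\bigl(g(y)\xi\cdot\xi\bigr)^{\frac{2-n}{2}},
\]
but this is false. Using $g=(\det\sigma)^{\frac{1}{n-2}}\sigma^{-1}$, so $\sigma^{-1}\xi\cdot\xi=(\det\sigma)^{-\frac{1}{n-2}}\,g\xi\cdot\xi$, one gets
\[
(\det\sigma)^{-1/2}\bigl(\sigma^{-1}\xi\cdot\xi\bigr)^{\frac{2-n}{2}}
=(\det\sigma)^{-1/2}(\det\sigma)^{-\frac{1}{n-2}\cdot\frac{2-n}{2}}\bigl(g\xi\cdot\xi\bigr)^{\frac{2-n}{2}}
=(\det\sigma)^{-1/2+1/2}\bigl(g\xi\cdot\xi\bigr)^{\frac{2-n}{2}}
=\bigl(g\xi\cdot\xi\bigr)^{\frac{2-n}{2}}.
\]
There is \emph{no} residual $(\det g)^{-1/2}$ factor; the leading term of Theorem~\ref{Neumann kernel theorem} is simply $2C_n\bigl(g(y)(x-y)\cdot(x-y)\bigr)^{\frac{2-n}{2}}$. (This is exactly the form the paper uses for $N_0$ in the proof of Theorem~\ref{Neumann kernel theorem}.) Consequently, your tangential limit gives
\[
\lim_{t\to 0^+}t^{\,n-2}N_{\sigma,q}^\Omega(x(t),y)=2C_n\bigl(g(y)\upsilon\cdot\upsilon\bigr)^{\frac{2-n}{2}},
\]
and raising to the power $\tfrac{2}{2-n}$ and dividing by the known constant yields $g(y)\upsilon\cdot\upsilon$ \emph{exactly}, for every tangential $\upsilon$. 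Polarization then gives $g_{(n-1)}(y)$ with no scalar ambiguity.

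So the ``normalization obstacle'' you spend the last third of the proposal on is a phantom: once the identity is corrected, the determination of $g_{(n-1)}(y)$ is immediate and the non-tangential approach (your option (a)) is unnecessary. With that fix your argument is complete and coincides with the paper's intended proof.
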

The proof of this lemma was provided in \cite[Lemma 3.6]{Al2017} for the case $q=0$. As for the case $q\neq 0$ treated here, the proof is a straightforward adaptation of that in \cite{Al2017}, we simply refer the reader to the latter for a detailed proof.

\begin{lemma}\label{determination of sigma near boundary}
    We assume that $y, \:\mathcal{U},\: \Sigma,\: \sigma$ and $q$ satisfy the hypotheses of Theorem \ref{Neumann kernel theorem}. Moreover, we assume that $\sigma \in L^\infty(\Omega, Sym_n)$ is constant on $\mathcal{U}$, and $q>0$ on a subset of $\Omega$ of positive Lebesgue measure, then the knowledge of $N_{\sigma, q}^\Omega(x,{y})$ for every $x,{y} \in \Sigma$ uniquely determines $\sigma$ on $\mathcal{U}$.
\end{lemma}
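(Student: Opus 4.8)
The plan is to leverage Lemma \ref{lemma 3.5}, which already gives us the $(n-1)\times(n-1)$ tangential submatrix $g_{(n-1)}(y)$ of $g$ at every point $y\in\Sigma$, and then to use the constancy of $\sigma$ on $\mathcal{U}$ together with the non-flatness of $\Sigma$ to upgrade this partial information to full knowledge of the constant matrix $g$ on $\mathcal{U}$. Since $\sigma \mapsto g = (\det\sigma)^{\frac{1}{n-2}}\sigma^{-1}$ is an invertible correspondence between symmetric positive definite matrices (indeed $g$ is likewise symmetric positive definite and $\sigma$ can be recovered from $g$ via $\sigma = (\det g)^{\frac{1}{n-2}}g^{-1}$), it suffices to determine $g$ as a constant symmetric matrix on $\mathcal{U}$.

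The first step is to fix, for each $y\in\Sigma$, an orthonormal basis $\upsilon_1(y),\dots,\upsilon_{n-1}(y)$ of the tangent space $T_y(\partial\Omega)$ together with the outer unit normal $\nu(y)$, forming an orthonormal basis of $\mathbb{R}^n$; note this basis is known, since $\Sigma$ is a known $C^{1,\alpha}$ portion of $\partial\Omega$. By Lemma \ref{lemma 3.5}, from the local N-D map (equivalently, from $N_{\sigma,q}^\Omega(x,y)$ for $x,y\in\Sigma$, via Theorem \ref{Neumann kernel theorem}) we know all the entries $g\upsilon_i(y)\cdot\upsilon_j(y)$ for $i,j=1,\dots,n-1$. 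Since $g$ is constant on $\mathcal{U}$, this means that for every $y\in\Sigma$ the orthogonal projection $P(y)^T g\, P(y)$ of the \emph{same} matrix $g$ onto the hyperplane $T_y(\partial\Omega)$ is known, where $P(y)$ is the $n\times(n-1)$ matrix whose columns are $\upsilon_1(y),\dots,\upsilon_{n-1}(y)$. The second step is to choose, using non-flatness (Definition \ref{nonflatness}), three points $y_1,y_2,y_3\in\Sigma$ whose normals $\nu(y_1),\nu(y_2),\nu(y_3)$ are pairwise non-parallel. I would then argue that the linear map $g \mapsto (P(y_1)^TgP(y_1),\,P(y_2)^TgP(y_2),\,P(y_3)^TgP(y_3))$ on $Sym_n$ is injective: if $h\in Sym_n$ has vanishing tangential part at all three $y_k$, then $h\nu(y_k)$ is parallel to $\nu(y_k)$ for each $k$, i.e. each $\nu(y_k)$ is an eigenvector of $h$; three pairwise non-parallel eigenvectors in $\mathbb{R}^n$ (for $n=3$ this is immediate; for $n\geq 4$ one additionally uses that $h$ restricted to the span of any two of the normals is a multiple of the identity on that plane, hence all directions in that plane are eigenvectors with a common eigenvalue, and one stitches the eigenvalues together) force $h$ to be a scalar multiple of the identity, and then the vanishing of its tangential part forces that scalar to be $0$. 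Hence $g$ — and therefore $\sigma = (\det g)^{\frac{1}{n-2}}g^{-1}$ — is uniquely determined as a constant matrix on $\mathcal{U}$.

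The main obstacle I anticipate is making the injectivity argument in the second step fully rigorous for general $n\geq 3$: showing that a symmetric matrix whose compression to each of three hyperplanes with pairwise non-parallel normals is zero must be a multiple of the identity. The case $n=3$ is elementary (the three normals span $\mathbb{R}^3$ and each is an eigenvector, but the eigenvalue condition from compatibility on the planes pins down a common eigenvalue). For $n\geq 4$ one must be more careful: the condition at a single point $y_k$ only says $h\nu(y_k)\in\mathrm{span}\,\nu(y_k)$, which is much weaker, so one genuinely needs to combine the information at the three points. The cleanest route is to observe that $h\nu(y_k)=\mu_k\nu(y_k)$ and then use that for $k\neq l$, $0 = \upsilon\cdot h\nu(y_l)$ for any $\upsilon\perp\nu(y_l)$; taking $\upsilon$ in the plane spanned by $\nu(y_k),\nu(y_l)$ and orthogonal to $\nu(y_l)$, and combining with the analogous relation with $k,l$ swapped, one deduces $\mu_k=\mu_l$ and that $h$ acts as that common scalar on the whole plane $\mathrm{span}\{\nu(y_k),\nu(y_l)\}$; doing this for all three pairs and using that the normals are pairwise non-parallel yields $h = \mu I$ on a subspace large enough (together with the tangential vanishing) to conclude $\mu=0$ and $h=0$. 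I would present this as a short self-contained linear-algebra lemma, and otherwise the proof is a direct assembly of Lemma \ref{lemma 3.5}, the constancy hypothesis, and non-flatness.
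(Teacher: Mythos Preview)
Your overall strategy---invoke Lemma~\ref{lemma 3.5} at the three non-flat points, exploit the constancy of $g$ to view these as three hyperplane compressions $P(y_k)^TgP(y_k)$ of one fixed symmetric matrix, and then argue that these three compressions determine $g$---is exactly the content of the paper's proof, which carries out this reconstruction explicitly in coordinates adapted to $y_1$ and solves a small linear system for the remaining two entries $g_{n-1,n},\,g_{n,n}$.

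However, your injectivity argument contains a genuine error. The claim ``if $h\in Sym_n$ has vanishing tangential part at $y_k$, then $h\nu(y_k)\parallel\nu(y_k)$'' is false. Vanishing of $P(y_k)^ThP(y_k)$ says only that $\upsilon\cdot h\upsilon'=0$ for all tangential $\upsilon,\upsilon'$, i.e.\ that in the adapted orthonormal basis $\{\upsilon_1,\dots,\upsilon_{n-1},\nu(y_k)\}$ one has
\[
h=\begin{pmatrix} 0_{(n-1)\times(n-1)} & a\\ a^T & b\end{pmatrix},
\]
with the cross-vector $a\in\mathbb{R}^{n-1}$ a priori nonzero. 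Hence $h\nu(y_k)=\sum_i a_i\upsilon_i+b\,\nu(y_k)$ need not be parallel to $\nu(y_k)$, and your subsequent ``eigenvector'' reasoning (and the deduction $0=\upsilon\cdot h\nu(y_l)$) collapses. You appear to be conflating the compression $P^ThP$ (which is all Lemma~\ref{lemma 3.5} gives) with the stronger information $hP$.

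The injectivity statement you want \emph{is} true, but it needs a different argument. The paper's route is the cleanest: choose coordinates so that $\nu(y_1)=-e_n$, rotate tangentially so that $\nu(y_2)\in\mathrm{span}\{e_{n-1},e_n\}$ and $\nu(y_3)\in\mathrm{span}\{e_{n-2},e_{n-1},e_n\}$; then $\Pi_1$ gives $h_{ij}=0$ for $i,j\le n-1$, $\Pi_2$ forces $h_{in}=0$ for $i\le n-2$ together with one linear relation between $h_{n-1,n}$ and $h_{nn}$, and $\Pi_3$ supplies a second independent relation (the non-flatness hypothesis is precisely what guarantees independence). This is the computation you should carry out. (Minor slip: the inverse formula is $\sigma=(\det g)^{1/2}g^{-1}$, not $(\det g)^{1/(n-2)}g^{-1}$.)
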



\begin{proof}[Proof of Lemma \ref{determination of sigma near boundary}]

\: The proof follows closely the line of reasoning of \cite[Proposition 5.1]{alessandrini2024determining}, where $g$ was quantitatively estimated when $q=0$. Here our approach is qualitative as we aim at identifying $g$ when $q\neq 0$, hence we include this proof for the sake of completeness as it is a slight variation to that in \cite[Proposition 5.1]{alessandrini2024determining}. We denote the canonical basis in $\mathbb{R}^n$ by $\{ e_1, \dots , e_n\}$ and assume, without loss of generality, that $y_1 =0$, that the tangent space to $\Sigma$ at the point $0$ is $T_0 ( \partial \Sigma) = \langle e_1, \dots , e_{n-1} \rangle $ and the outer unit normal at $0$ is $\nu (y_1) = -e_n$.
Without loss of generality, we assume that there exist $\gamma_1, \gamma_2 , \gamma_3 \in [ 0,1)$, such that 
\[\nu(y_2) = \frac{-e_n + \gamma_1 e_{n-1}}{\sqrt{1 +\gamma_1^2}}\quad\textnormal{and}\quad \nu(y_3) = \frac{-e_n + \gamma_2 e_{n-1} + \gamma_3 e_{n-2}}{\sqrt{1 + \gamma_2^2 + \gamma_3^2}}.\]
Denoting by $\Pi_1 = T_0(\partial\Sigma)$, $\Pi_2, \Pi_3$ the tangent spaces at $y_2, y_3$ on $\Sigma$, respectively, we observe that $\Pi_1, \Pi_2, \Pi_3$ are generated by 
\begin{align} \label{orthonormal basis}
    & \Pi_1 = \langle e_1, \dots, e_{n-1} \rangle,\\
    &\Pi_2 = \Bigg\langle e_1, \dots , e_{n-2}, \frac{e_{n-1} + \gamma_1 e_n}{\sqrt{1 + \gamma^2_1}} \Bigg \rangle,\\
    &\Pi_3 = \langle e_1, \dots, e_{n-3}, e_{n-2} + \gamma_3e_n, e_{n-1} + \gamma_2 e_n \rangle,\label{orthonormal basis 3}
\end{align}
respectively. The non-flat condition \eqref{nonflatness} on $\Sigma$ implies that 
\[ \nu(0) \cdot \nu(y_2) = \frac{1}{\sqrt{1 +\gamma_1^2}} < 1,\]
leading to 
\[ \gamma_1 \neq 0. \]
We also have that 
\[ \nu(0) \cdot \nu (y_3) = \frac{1}{\sqrt{1 + \gamma_2^2 + \gamma_3^2}}  < 1. \]
Hence we have that 
\[ \gamma_2^2 + \gamma_3^2 >0.\]
In other words, we have  that
\[ \gamma_2 \neq 0, \quad \textnormal{or} \quad  \gamma_3 \neq 0.\]
We also have 
\[ \nu (y_2)\cdot \nu (y_3) = \frac{1 + \gamma_1 \gamma_2}{\sqrt{1 + \gamma_1^2}  \sqrt{1 + \gamma_2^2+ \gamma_3^2}}  < 1.\]
Thus,
\begin{align*}
    ( 1 + \gamma_1\gamma_2)^2 &<(1 + \gamma_1^2) ( 1 + \gamma_2^2 + \gamma_3^2)\\
    & < ( \gamma_1 - \gamma_2)^2 +1 + 2 \gamma_1 \gamma_2 + \gamma_1^2 \gamma_2^2 + \gamma_1^2 \gamma_3^2 + \gamma_3^2.
\end{align*}
Therefore, 
\[   \gamma_3^2 ( 1 + \gamma_1^2) + (\gamma_1 - \gamma_2)^2>0. \]
If $\gamma_3 =0$, we we have $\gamma_1 \neq \gamma_2$. To summarise, $\gamma_1,\: \gamma_2, \:\gamma_3$ must satisfy
       \begin{equation}\label{condition gamas}
  \{\gamma_1 \neq 0\quad\textnormal{and}\quad \gamma_3 \neq 0\}\quad\textnormal{or}\quad\{\gamma_1 \neq 0, \quad \gamma_3 =0 \quad \mbox{and} \quad  \gamma_2\neq 0, \quad \mbox{with} \quad \gamma_1 \neq \gamma_2\}.
        \end{equation}
From Lemma \ref{lemma 3.5}, we can recover the tangential component of $g$ over the first tangential plane $\Pi_1$
\begin{equation}\label{g upper left matrix}
   g_{ij}= g e_i \cdot e_j, \quad \text{for\quad $i,\:j=1, \dots , n-1.$}
\end{equation}
From the tangential component of $g$ over $\Pi_2$, we have that the following quantities
\begin{equation}\label{g over Pi_2}
    ge_i \cdot \frac{e_{n-1}+ \gamma_1 e_n}{\sqrt{1 + \gamma^2_1}} = \frac{1}{\sqrt{1 + \gamma^2_1}} (g_{n-1, i} + \gamma_1 g_{n,i}), \quad \text{for $i=1, \dots , n-2$}
\end{equation}
are known. From \eqref{condition gamas}-\eqref{g over Pi_2} we also have that
\begin{equation} \label{g_n,i}
    g_{n,i}, \quad \text{for any $i = 1, \dots ,n-2$}
\end{equation}
are known. To determine the remaining quantities, $g_{n-1, n}, g_{n,,n}$, we consider the tangential component of $g$ over $\Pi_2$, which is also known
\begin{equation}
    g \Bigg( \frac{e_{n-1}+ \gamma_1 e_n}{\sqrt{1 + \gamma^2_1}} \Bigg) \cdot \Bigg( \frac{e_{n-1}+ \gamma_1 e_n}{\sqrt{1 + \gamma^2_1}} \Bigg) = \frac{1}{\sqrt{1 + \gamma^2_1}} (g_{n-1, n-1} + 2 \gamma_1 g_{n-1,n} + \gamma_1^2 g_{n,n}).
\end{equation}
Since, by \eqref{g upper left matrix}, $g_{n-1, n-1}$ is known, the quantity 
\begin{equation}\label{Determination of last entries of g 1}
    2 g_{n-1,n} + \gamma_1 g_{n,n} 
\end{equation}
 is known too.
From the known tangential component of $g$ over $\Pi_3$, the quantity
\begin{equation}
    g \Bigg( \frac{e_{n-2}+ \gamma_3 e_n}{\sqrt{1 + \gamma^2_3}} \Bigg) \cdot \Bigg( \frac{e_{n-2}+ \gamma_3 e_n}{\sqrt{1 + \gamma^2_3}} \Bigg) = \frac{1}{\sqrt{1 + \gamma^2_3}} (g_{n-2, n-2} + 2 \gamma_3 g_{n-2,n} + \gamma_3^2 g_{n,n}),
\end{equation}
is also known, hence, by \eqref{g upper left matrix} and \eqref{g_n,i}, we have that 
\begin{equation}
    \gamma^2_3 g_{n,n} 
\end{equation}
is known. Finally considering again the tangential component of $g$ over $\Pi_3$,
\begin{equation}
      g \Bigg( \frac{e_{n-1}+ \gamma_2 e_n}{\sqrt{1 + \gamma^2_2}} \Bigg) \cdot \Bigg( \frac{e_{n-1}+ \gamma_2 e_n}{\sqrt{1 + \gamma^2_2}} \Bigg) = \frac{1}{\sqrt{1 + \gamma^2_2}} (g_{n-1, n-1} + 2 \gamma_2 g_{n-1,n} + \gamma_2^2 g_{n,n})
\end{equation}
is known. 
By \eqref{g upper left matrix},  
\begin{equation}\label{Determination of last entries of g 2}
    2 \gamma_2 g_{n-1,n} + \gamma_2^2 g_{n,n}
\end{equation}
is known. Gathering together the terms in \eqref{Determination of last entries of g 1}-\eqref{Determination of last entries of g 2}, we have  that $\mathcal{AG}\in \mathbb{R}^3$ is known with
\[ \mathcal{A} = \begin{pmatrix}
2 & \gamma_1 \\
0 & \gamma_3^2\\
2\gamma_2 & \gamma_2^2
\end{pmatrix},	\quad  G = \begin{pmatrix}
g_{n-1,n}\\
g_{n,n}
\end{pmatrix}. 	\]
If the first condition in \eqref{condition gamas} holds, then $G\in \mathbb{R}^2$ is determined by inverting $\mathcal{A}_1 = \begin{pmatrix}
2 & \gamma_1 \\
0 & \gamma_3^2
\end{pmatrix},$
where, if the second condition in \eqref{condition gamas} holds, then $G$ is determined by inverting $\mathcal{A}_2 = \begin{pmatrix}
2 & \gamma_1 \\
2\gamma_2 & \gamma_2^2
\end{pmatrix}.$
\end{proof}
\begin{remark}\label{remark K}
The knowledge of the full N-D map is equivalent to knowing the boundary values of the Neumann kernel (see \cite[Remark 3.7]{Al2017}). However, only the asymptotic behaviour of $N_{\sigma, q}(x,y)$, for $x,y \in \Sigma$ and $x \rightarrow y$ can be determined by the local N-D map $\mathcal{N}^\Sigma_{\sigma, q}$, when $q=0$. Hence, the following adjustment to $N_{\sigma, 0}(x,y)$ was made in \cite{Al2017},
     \begin{equation}\label{K for q0}
     K_{\sigma, 0} (x,y,w,z) = N_{\sigma, 0}(x,y)-N_{\sigma, 0}(x,w) -N_{\sigma, 0}(z,y)+N_{\sigma, 0}(z,w) 
      \end{equation}
for $x,y,w,z \in \Sigma$ distinct points and it was shown that knowing $\mathcal{N}^\Sigma_{\sigma, 0}$ is equivalent to knowing $K_{\sigma, 0}(x,y,w,z)$ for any $x,y,w,z \in \Sigma$, which asymptotic behavior, for fixed $w,z \in\Sigma$, is the same as $N_{\sigma, 0}(x,y)$, as $x \rightarrow y$. 
\end{remark}
As a similar argument to that in \cite{Al2017} can be adopted here to the case when the non-negative $q$ is positive on a set of positive Lebesgue measure, we define
\begin{equation}\label{K for q}
     K_{\sigma, q} (x,y,w,z) = N_{\sigma, q}(x,y)-N_{\sigma, q}(x,w) -N_{\sigma, q}(z,y)+N_{\sigma, q}(z,w) 
      \end{equation}
for $x,y,w,z \in \Sigma$ distinct points, and observe that remark \ref{remark K} is valid also in the current setting and refer to \cite{Al2017} for its proof.

\subsection{Unique determination of $\sigma$ and $q$ on $D_1$}\label{sec4}
Next, we show that, once we have determined $\sigma$ on $\Sigma$, taking advantage of the \textit{a-priori} assumptions \eqref{sigma, q result 1}, \eqref{ellipticity condition 0} on $\sigma$ and $q \in L^\infty (\Omega)$, a non-negative scalar-valued function in $\Omega$ such that $q >0$ on a subset of $\Omega$ of positive Lebesgue measure, we can uniquely determine both $\sigma$ and $q$ in $D_1$.
\begin{proposition}\label{V on D1}
Let $\Omega$, $\Sigma$, ${y'}_0\in\Sigma$, as in Lemma \ref{determination of sigma near boundary}. Assume that $\sigma^{(i)}, q^{(i)}\in L^{\infty}(\Omega)$ are as in \eqref{sigma, q result 1}, $\sigma^{(i)}$ satisfies \eqref{ellipticity condition 0} and $q_{J_i}^{(i)}>0$ for some $J_i \in \{1, \dots,N\}$, for $i=1,2$. If 
\begin{equation}\label{equality maps}
\mathcal{N}^{\Sigma}_{\sigma^{(1)}, q^{(1)}} = \mathcal{N}^{\Sigma}_{\sigma^{(2)}, q^{(2)}},
\end{equation}
then
\begin{equation}\label{sigma V on D1}
\sigma^{(1)} = \sigma^{(2)}\qquad\textnormal{and}\qquad q^{(1)} = q^{(2)},\qquad\textnormal{in}\quad D_1.
\end{equation}
\end{proposition}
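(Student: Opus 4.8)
The plan is to determine the conductivity in $D_1$ from the boundary recovery already in hand, and then to identify the potential in $D_1$ by a singular solution argument driven by the Neumann kernel asymptotics of Theorem \ref{Neumann kernel theorem}. For the conductivity: by assumption $(ii)$ the curved $C^{1,\alpha}$ portion $\Sigma_1\subset\partial D_1\cap\Sigma$ has points interior to $\partial\Omega$ — in particular the point $y'_0$ of the statement, cf.\ Lemma \ref{determination of sigma near boundary} — near which $\Omega$ coincides locally with $D_1$; after shrinking $\mathcal{U}$ around such a $y'_0$, the coefficients $\sigma^{(i)}$, $q^{(i)}$ are the constants $\sigma^{(i)}_1$, $q^{(i)}_1$ on $\mathcal{U}\cap\Omega$, Lemma \ref{determination of sigma near boundary} applies, and \eqref{equality maps} forces $\sigma^{(1)}=\sigma^{(2)}$ on $\mathcal{U}$; in particular $\sigma^{(1)}_1=\sigma^{(2)}_1=:\sigma_1$, i.e.\ $\sigma^{(1)}=\sigma^{(2)}$ in $D_1$, the first assertion of \eqref{sigma V on D1}.

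For the potential, suppose by contradiction $q^{(1)}_1\neq q^{(2)}_1$. Fix $r>0$ with $B_r(y'_0)\cap\Omega\subset D_1$ and a tangential multi-index $\beta$ of minimal length subject to $2(2-n-|\beta|)\le -n$ (so $|\beta|=0$ for $n\ge 4$ and $|\beta|=1$ for $n=3$). Let $\psi^\varepsilon\in H^{-\frac{1}{2}}(\Sigma)$ be smooth densities supported near $y'_0$ and converging, as $\varepsilon\to0$, to $\partial^\beta\delta_{y'_0}$, and let $u^{(i)}_\varepsilon\in H^1(\Omega)$ solve the Neumann problem for $(\sigma^{(i)},q^{(i)})$ with datum $\psi^\varepsilon$ (well posed since $q^{(i)}_{J_i}>0$). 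By Alessandrini's identity \eqref{Alessandrini} and the equality of the local maps, $0=\int_\Omega(\sigma^{(1)}-\sigma^{(2)})\nabla u^{(1)}_\varepsilon\cdot\nabla u^{(2)}_\varepsilon+(q^{(1)}-q^{(2)})u^{(1)}_\varepsilon u^{(2)}_\varepsilon$; since $\sigma^{(1)}-\sigma^{(2)}$ vanishes on $D_1$ and $q^{(1)}-q^{(2)}=q^{(1)}_1-q^{(2)}_1$ there, this rearranges to
\[ (q^{(1)}_1-q^{(2)}_1)\int_{D_1}u^{(1)}_\varepsilon u^{(2)}_\varepsilon = -\int_{\Omega\setminus D_1}\!\Big[(\sigma^{(1)}-\sigma^{(2)})\nabla u^{(1)}_\varepsilon\cdot\nabla u^{(2)}_\varepsilon+(q^{(1)}-q^{(2)})u^{(1)}_\varepsilon u^{(2)}_\varepsilon\Big]. \]
On $\Omega\setminus D_1$ and on $D_1\setminus B_r(y'_0)$ — all at positive distance from $y'_0$ — the representation $u^{(i)}_\varepsilon=-\int_\Sigma\psi^\varepsilon(\eta)N^\Omega_{\sigma^{(i)},q^{(i)}}(\cdot,\eta)\,dS(\eta)$ together with interior and boundary elliptic estimates gives $u^{(i)}_\varepsilon\to(-1)^{|\beta|+1}\partial^\beta_\eta N^\Omega_{\sigma^{(i)},q^{(i)}}(\cdot,\eta)|_{\eta=y'_0}$ in $C^1$, so the right-hand side above and $\int_{D_1\setminus B_r(y'_0)}u^{(1)}_\varepsilon u^{(2)}_\varepsilon$ remain bounded as $\varepsilon\to0$.

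It then remains to show $\int_{D_1\cap B_r(y'_0)}u^{(1)}_\varepsilon u^{(2)}_\varepsilon\to+\infty$, which forces the left-hand side to blow up and contradicts the displayed identity. Near $y'_0$ the coefficients are $\sigma_1$ and constant potentials, $y'_0\in\Sigma_1$ is $C^{1,\alpha}$, and $\sigma^{(1)}(y'_0)=\sigma^{(2)}(y'_0)=\sigma_1$, so Theorem \ref{Neumann kernel theorem} furnishes, for both $i$, the \emph{same} leading term $P(x-y'_0)=2C_n(\det\sigma_1)^{-\frac{1}{2}}\big(\sigma_1^{-1}(x-y'_0)\cdot(x-y'_0)\big)^{\frac{2-n}{2}}$ with an $O(|x-y'_0|^{2-n+\alpha})$ remainder; the $|\beta|$-th pole derivative multiplies the homogeneity of the leading term by $|x-y'_0|^{-|\beta|}$ and the remainder bound likewise. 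Writing $u^{(1)}_\varepsilon u^{(2)}_\varepsilon=(u^{(1)}_\varepsilon)^2-u^{(1)}_\varepsilon(u^{(1)}_\varepsilon-u^{(2)}_\varepsilon)$ and using that $u^{(1)}_\varepsilon-u^{(2)}_\varepsilon$ inherits only the strictly milder remainder singularity (the $\alpha$-gain, since the leading terms agree), the cross term integrates over $D_1\cap B_r(y'_0)$ to a quantity of strictly lower order, while $\int_{D_1\cap B_r(y'_0)}(u^{(1)}_\varepsilon)^2\to+\infty$ by Fatou's lemma, because its pointwise limit $\big(\partial^\beta_\eta N^\Omega_{\sigma^{(1)},q^{(1)}}(\cdot,\eta)|_{y'_0}\big)^2$ has, by the choice of $|\beta|$, a non-integrable singularity at $y'_0$ over the half-ball $D_1\cap B_r(y'_0)$. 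Hence $q^{(1)}_1=q^{(2)}_1$, i.e.\ $q^{(1)}=q^{(2)}$ in $D_1$.

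The main obstacle I anticipate is this last step: legitimizing the use of the Neumann kernel with pole on $\partial\Omega$ (not an $H^1$ function) inside Alessandrini's identity — handled via the smooth approximants $\psi^\varepsilon$ and a renormalization in the spirit of Remark \ref{remark K} — together with the dimension-dependent comparison of singular orders showing that the leading contribution of the product of pole-derivatives of the Neumann kernels dominates all remainder contributions. The boundary-recovery input (via Lemma \ref{determination of sigma near boundary}) and the well-posedness of the Neumann problems (guaranteed by $q^{(i)}_{J_i}>0$) are routine.
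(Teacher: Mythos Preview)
Your overall strategy coincides with the paper's: recover $\sigma_1$ via Lemma~\ref{determination of sigma near boundary}, then isolate $q_1^{(1)}-q_1^{(2)}$ by inserting Neumann--kernel singular solutions into Alessandrini's identity and exploiting the asymptotics of Theorem~\ref{Neumann kernel theorem}. The execution, however, diverges. The paper does \emph{not} put the pole on $\Sigma$ and approximate by smooth data; instead it places the pole at $y_1=P_1+r\nu(P_1)$ strictly outside $\overline{\Omega}$ (using, implicitly, the augmented--domain Neumann kernels $\tilde N^{(i)}$ introduced just afterwards), takes one \emph{normal} derivative $\partial_{y_n}$ in every dimension, and derives a quantitative inequality
\[
|q_1^{(1)}-q_1^{(2)}|\int_{B_{r_0/4}(P_1)\cap D_1}|x-y_1|^{2-2n}\,dx\ \le\ C\big(r_0^{2-n}+r_0^{-n}+r^{2-n+\alpha}+r^{2-n+2\alpha}\big),
\]
which after dividing by the left integral ($\sim r^{2-n}$) and sending $r\to 0$ gives $q_1^{(1)}=q_1^{(2)}$ directly. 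For every $r>0$ all terms are finite, so no limiting/blow-up bookkeeping is needed.

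Your variant is plausible but has two soft spots you should be aware of. First, for $n=3$ you invoke a tangential pole-derivative of $N_{\sigma,q}^\Omega$, yet Theorem~\ref{Neumann kernel theorem} only controls $N_{\sigma,q}^\Omega(x,y)$ itself; the assertion that ``the remainder bound likewise'' loses exactly $|x-y'_0|^{-|\beta|}$ under $\partial_y^\beta$ is an additional statement you would have to prove (the paper avoids this because with the pole in the interior of the augmented domain the kernel is genuinely smooth in $y$). Second, Fatou alone does not justify the ``strictly lower order'' dismissal of the cross term: for $n\ge 5$ the pointwise limit $N^{(1)}(N^{(1)}-N^{(2)})=O(|x-y'_0|^{4-2n+\alpha})$ is itself non-integrable near $y'_0$, so both pieces diverge and you must compare \emph{rates} in $\varepsilon$, which requires two-sided control of the approximants $u_\varepsilon^{(i)}$ rather than just their pointwise limits. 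The paper's device of keeping the pole at positive distance $r$ and letting $r\to 0$ sidesteps both issues.
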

\begin{proof}[Proof of Proposition \ref{V on D1}]
$\;$ As a straightforward consequence of Lemma \ref{determination of sigma near boundary}, we have
\begin{equation}\label{identification sigma}
\sigma^{(1)} = \sigma^{(2)},\qquad\textnormal{in}\quad D_1.
\end{equation}
Let $B_{\frac{r_0}{4}}(P_1)$ be such that $B_{\frac{r_0}{4}}(P_1) \subset \subset \mathcal{U}$, for $r_0$ as per definition \ref{LipschitzBoundary}. For $y,z \in B_{\frac{r_0}{4}}(P_1)\backslash\overline{\Omega},$ we have

\begin{eqnarray}\label{Singular solutions 2}
 & & \Big\langle \sigma^{(1)} \nabla \tilde{N}^{(1)}(y,\cdot)\cdot \nu, \Big(\mathcal{N}^{\Sigma}_{\sigma^{(2)},q^{(2)}} - \mathcal{N}^{\Sigma}_{\sigma^{(1)}, q^{(1)}}\Big) \sigma^{(2)} \nabla \tilde{N}^{(2)}(y,\cdot)\cdot \nu \Big\rangle\nonumber\\
 & & =   \int_{\Omega} \Big(\sigma^{(1)}-\sigma^{(2)}\Big)(x)\nabla_x \tilde{N}^{(1)}(y,x) \cdot \nabla_x \tilde{N}^{(2)}(z,x)\:\textnormal{d}x\\
 && +\int_{\Omega}\Big(q^{(1)}- q^{(2)}\Big)(x)\tilde{N}^{(1)}(y,x) \cdot \tilde{N}^{(2)}(z,x)\:\textnormal{d}x \nonumber,
      \end{eqnarray}
By combining \eqref{equality maps} together with \eqref{Singular solutions 2}, we obtain
\begin{equation}
\begin{split}
0 &=\int_{B_{\frac{r_0}{4}}(P_1)\cap D_1}\Big(q^{(1)}- q^{(2)}\Big)(x)\tilde{N}^{(1)}(y,x) \:\tilde{N}^{(2)}(z,x)\:\textnormal{d}x\\
&+\int_{\Omega\setminus (B_{\frac{r_0}{4}}(P_1)\cap D_1)} \Big(\sigma^{(1)}-\sigma^{(2)}\Big)(x)\nabla_x \tilde{N}^{(1)}(y,x)\cdot \nabla_x \tilde{N}^{(2)}(z,x)\:\textnormal{d}x\\
&+\int_{\Omega\setminus (B_{\frac{r_0}{4}}(P_1)\cap D_1)}\Big(q^{(1)}- q^{(2)}\Big)(x)\tilde{N}^{(1)}(y,x) \:\tilde{N}^{(2)}(z,x)\:\textnormal{d}x, 
\end{split}
\end{equation}
and
\begin{equation}\label{sing sol 3}
\begin{split}
0 &=\int_{B_{\frac{r_0}{4}}(P_1)\cap D_1}\Big(q^{(1)}- q^{(2)}\Big)(x)\partial_{y_n}\tilde{N}^{(1)}(y,x) \:\partial_{z_n}\tilde{N}^{(2)}(z,x)\:\textnormal{d}x\\
&+\int_{\Omega\setminus (B_{\frac{r_0}{4}}(P_1)\cap D_1)} \Big(\sigma^{(1)}-\sigma^{(2)}\Big)(x)\nabla_x \partial_{y_n}\tilde{N}^{(1)}(y,x)\cdot \nabla_x \partial_{z_n}\tilde{N}^{(2)}(z,x)\:\textnormal{d}x\\
& +\int_{\Omega\setminus (B_{\frac{r_0}{4}}(P_1)\cap D_1)}\Big(q^{(1)}- q^{(2)}\Big)(x)\partial_{y_n}\tilde{N}^{(1)}(y,x) \:\partial_{z_n}\tilde{N}^{(2)}(z,x)\:\textnormal{d}x.
\end{split}
\end{equation}
Defining $y_1 = P_1+r\nu(P_1)$, with $0<r<\frac{r_0}{8}$, where $\nu$ denotes the outer unit normal to $\partial{D}_1$, and setting $y=z=y_1$ in \eqref{sing sol 3}, we obtain
\begin{equation}\label{sing sol 4}
\begin{split}
0 &= \int_{B_{\frac{r_0}{4}}(P_1)\cap D_1}\Big(q^{(1)}- q^{(2)}\Big)(x)\partial_{y_n}\tilde{N}^{(1)}(y_1,x) \:\partial_{z_n}\tilde{N}^{(2)}(y_1,x)\:\textnormal{d}x\\
&+\int_{\Omega\setminus (B_{\frac{r_0}{4}}(P_1)\cap D_1)} \Big(\sigma^{(1)}-\sigma^{(2)}\Big)(x)\nabla_x \partial_{y_n}\tilde{N}^{(1)}(y_1,x)\cdot \nabla_x \partial_{z_n}\tilde{N}^{(2)}(y_1,x)\:\textnormal{d}x\\
& +\int_{\Omega\setminus (B_{\frac{r_0}{4}}(P_1)\cap D_1)}\Big(q^{(1)}- q^{(2)}\Big)(x)\partial_{y_n}\tilde{N}^{(1)}(y_1,x) \:\partial_{z_n}\tilde{N}^{(2)}(y_1,x)\:\textnormal{d}x.
\end{split}
\end{equation}
Defining
\begin{equation}\label{Gammai}
\Gamma_i(x,y):=C_n\left(\det(\sigma^{(i)}(y))\right)^{-\frac{1}{2}}\left((\sigma^{(i)}(y))^{-1}(x-y)\cdot (x-y)\right)^{\frac{2-n}{n}},
\end{equation}
where  $C_n>0$ is the constant introduced in \eqref{Neumann kernel asympt}, due to the regularity of $\sigma^{(i)}$, $q^{(i)}$ on $\overline{D}_1$, for $i=1,2$ (see \cite{mitrea2000potential}), and the Caccioppoli inequatlity, \eqref{sing sol 4} leads to
\begin{equation}
    \begin{split}
       & |q_1^{(1)} - q_1^{(2)}|\:\int_{B_{\frac{r_0}{4}}(P_1)\cap D_1} |\partial_{y_n}\Gamma_1(x,y_1)|\:|\partial_{z_n}\Gamma_2(x,y_1)|\:\textnormal{d}x\\ 
       &\leq C\bigg\{\int_{B_{\frac{r_0}{4}}(P_1)\cap D_1} |x-y_1|^{2-2n+\alpha} \:\textnormal{d}x+\int_{B_{\frac{r_0}{4}}(P_1)\cap D_1} |x-y_1|^{2-2n+2\alpha} \:\textnormal{d}x \\
       & +  r_0^{2-n} + r_0^{-n}\bigg\}, 
       \end{split}
\end{equation}
hence
\begin{equation}
   |q_1^{(1)} - q_1^{(2)}| \:\int_{B_{\frac{r_0}{4}}(P_1)\cap D_1} |x-y_1|^{2-2n}\:\textnormal{d}x\\ \leq C\lambda (r_0^{2-n} + r_0^{-n} + r^{2-n+\alpha} + r^{2-n + 2\alpha}),
\end{equation}
so that
\begin{equation}\label{sing sol 5}
    \begin{split}
        |q_1^{(1)} - q_1^{(2)}| &\leq C(r^{n-2} + r^{\alpha} + r^{2\alpha}).
    \end{split}
\end{equation}
By letting $r \rightarrow 0$ in \eqref{sing sol 5}, we obtain 
\begin{equation}
        q^{(1)}=q^{(2)}, \qquad \text{in\quad $D_1$},
\end{equation}
which concludes the proof of \eqref{sigma V on D1}.
\end{proof}


\subsection{Unique determination of $\sigma$ and $q$ in $\Omega$}\label{sec5}

\begin{proof}[Proof of Theorem \ref{Main theorem of uniqueness}]
$\;$ Without loss of generality we assume $\Sigma = \Sigma_1$. If $\mathcal{N}^{\Sigma_1}_{\sigma^{(1)}, q^{(1)}} = \mathcal{N}^{\Sigma_1}_{\sigma^{(2)}, q^{(2)}}$, then by Lemmas \ref{determination of sigma near boundary}, Proposition \ref{V on D1}, we have
\begin{equation}
\sigma^{(1)} = \sigma^{(2)} \quad\text{and}\quad q^{(1)} = q^{(2)} \quad\text{in $D_1$}.
\end{equation}
Let $D_K$ be a subdomain of $\Omega$, introduced in section \ref{sec general partition}, with $1 \neq k\leq N$. There exists $j_1,\dots, j_K \in \{ 1,\dots N\}$ such that 
\begin{equation}\label{Division of D}
D_{j_1} = D_1, \dots, D_{j_K} = D_K. 
\end{equation}
For simplicity, we denote the chain of subdomains mentioned above by $D_1, \dots, D_K$, $K \leq N$. 
We proceed by induction and assume that $\sigma^{(1)} = \sigma^{(2)}$ and $q^{(1)} = q^{(2)}$, in $D_i$, for every $1 \leq i \leq K$ and show that $\sigma^{(1)} = \sigma^{(2)}$ and $q^{(1)} = q^{(2)}$ in $D_{K+1}$ too.
 We find it convenient to consider a larger domain that contains $\Omega$. To this purpose, note that up to a rigid transformation of coordinates we can assume that 
    \[P_1 =0; \quad (\mathbb{R}^n\backslash\Omega) \cap B_{r_0} = \{(x',x_n) \in B_{r_0}|\quad x_n < \phi(x')\},\]
    where $\phi$ is the Lipschitz function introduced definition \ref{LipschitzBoundary}.
Define
\[ D_0 = \left\{x \in (\mathbb{R}^n\backslash\Omega )\cap B_{r_0}\Big|\quad |x_i| <{\frac{2}{3}}r_0,\quad i =1,\dots,n-1,\quad |x_n - \frac{r_0}{6}|<\frac{5}{6}r_0\right\}.\]
The augmented domain $\Omega_0 = \Omega \cup D_0$ is of Lipschitz class with Lipschitz character depending on that of $\partial\Omega$. For any number $r \in (0, \frac{2}{3}r_0)$, we set
\[ (D_0)_r = \{x \in D_0 \;|\;  \mbox{dist}(x, \Omega) >r\}.\]
We denote by $L_i$ the operator introduced in \eqref{L} with $\sigma = \sigma^{(i)}$ and $q=q^{(i)}$, for $i=1,2$. We extend $\sigma^{(i)}$ and $q^{(i)}$ to $\tilde{\sigma}^{(i)}$ and $\tilde{q}^{(i)}$, respectively, on $\Omega_0$, by setting $\tilde{\sigma}^{(i)} |_{D_0}=\sigma^{(i)}_1$ and $\tilde{q}^{(i)}|_{D_0}=q^{(i)}_1$ and continue to denote the extended operators on $\Omega_0$ by $L_i$, 
\begin{equation}\label{Li extended}
    L_i = 
    -\mbox{div}(\tilde{\sigma}^{(i)}\nabla \cdot) +\tilde{q}^{(i)}\cdot,\quad \text{in}\quad\Omega_0,
\end{equation}
for $i=1,2$. For $y\in \Omega_0$, we define the Neumann kernel $\tilde{N}_{\tilde{\sigma}^{(i)}, \tilde{q}^{(i)}}$ as the solution to

 \begin{equation}\label{Neumann kernel externded}
\left\{ \begin{array}{lll}\displaystyle L_i \tilde{N}^{\Omega}_{\tilde{\sigma}^{(i)},\tilde{q}^{(i)}}(\cdot, y)=\delta(\cdot -{y}), \quad &\text{in} \quad  \Omega_0,\\
 \tilde{\sigma}^{(i)} \nabla \tilde{N}^{\Omega}_{\tilde{\sigma}^{(i)}, \tilde{q}^{(i)}}(\cdot, y)\cdot \nu = 0, \quad &\text{on} \quad \partial\Omega_0,\\
\end{array} \right.
\end{equation}   
    (see, for example, \cite{kim2024neumann}).
Notice that 
\begin{equation}\label{Neumann kernel symmetry}
\tilde{N}^{\Omega}_{\tilde{\sigma}^{(i)}, {\tilde{q}}^{(i)}}(x,y) = \tilde{N}^{\Omega}_{\tilde{\sigma}^{(i)}, {\tilde{q}}^{(i)}}(y,x), \quad \forall x,y \in \Omega_0, \quad x \neq y.
\end{equation}
We also simplify the notation by setting
        \[\tilde{N}^{\Omega}_{\tilde{\sigma}^{(i)}, \tilde{q}^{(i)}} = \tilde{N}^{(i)}, \quad \text{for $i =1,2$,}\]
and define
\begin{equation}\label{partioned domain}
D = \Bigg(\bigcup_{i=1}^N \overline{D_i} \Bigg); \quad E = \Omega \backslash \overline{D}.
\end{equation}
We introduce $\mathcal{N}^{\Sigma_{K+1}}_{\sigma^{(i)}, q^{(i)}}$, the local D-N map for the domain $E$ relative to $\sigma^{(i)}$, $q^{(i)}$ and localised on $\Sigma_{K+1}$, for $i =1,2.$

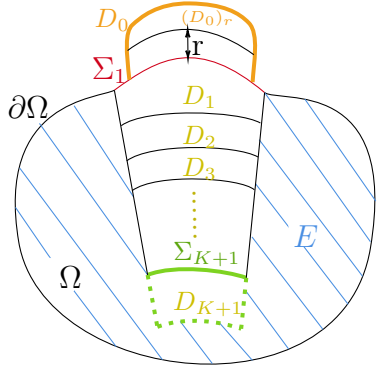
\begin{figure}[!h]
\centering

\begin{tikzpicture}[x=0.75pt,y=0.75pt,yscale=-.3,xscale=.3]

\draw [color={rgb, 255:red, 0; green, 0; blue, 0 }  ,draw opacity=1 ]   (446.5,194.5) .. controls (471.5,212.5) and (670.5,165.5) .. (622.5,455.5) .. controls (574.5,745.5) and (77.5,684.5) .. (37.5,445.5) .. controls (-2.5,206.5) and (171.5,204.5) .. (196.5,190.5) ;
\draw    (208,250) .. controls (248,220) and (402.5,224.5) .. (439.5,245.5) ;
\draw    (217,309) .. controls (257,279) and (398.5,281.5) .. (435.5,302.5) ;
\draw    (226,359) .. controls (266,329) and (406,337) .. (429.5,357.5) ;
\draw [color={rgb, 255:red, 126; green, 211; blue, 33 }  ,draw opacity=1 ][line width=1.5]    (251.83,500.17) .. controls (298.67,482) and (382,493) .. (416,505) ;
\draw  [color={rgb, 255:red, 255; green, 255; blue, 255 }  ,draw opacity=1 ][fill={rgb, 255:red, 255; green, 255; blue, 255 }  ,fill opacity=1 ] (379.73,518.91) .. controls (365.62,517.55) and (342.52,516.67) .. (316.44,516.67) .. controls (298.46,516.67) and (281.92,517.09) .. (268.79,517.79) -- (316.64,522) -- cycle ;
\draw [color={rgb, 255:red, 208; green, 2; blue, 27 }  ,draw opacity=1 ]   (196.5,190.5) .. controls (271.5,135.5) and (335.5,100.5) .. (446.5,194.5) ;
\draw [color={rgb, 255:red, 74; green, 144; blue, 226 }  ,draw opacity=1 ]   (32,387) -- (85.5,465.5) -- (124.5,520.5) -- (199.5,620.5) ;
\draw [color={rgb, 255:red, 74; green, 144; blue, 226 }  ,draw opacity=1 ]   (39,321) -- (276.5,642.5) ;
\draw [color={rgb, 255:red, 74; green, 144; blue, 226 }  ,draw opacity=1 ]   (71.83,246.17) -- (251.83,500.17) ;
\draw [color={rgb, 255:red, 74; green, 144; blue, 226 }  ,draw opacity=1 ]   (129.5,216.5) -- (228.5,369.5) ;
\draw [color={rgb, 255:red, 74; green, 144; blue, 226 }  ,draw opacity=1 ]   (306,578) -- (359.5,648.83) ;
\draw [color={rgb, 255:red, 74; green, 144; blue, 226 }  ,draw opacity=1 ]   (372,581.33) -- (426.17,643.5) ;
\draw [color={rgb, 255:red, 74; green, 144; blue, 226 }  ,draw opacity=1 ]   (415.5,527.5) -- (486.5,624.5) ;
\draw [color={rgb, 255:red, 74; green, 144; blue, 226 }  ,draw opacity=1 ]   (425.5,436.5) -- (543.5,591.5) ;
\draw [color={rgb, 255:red, 74; green, 144; blue, 226 }  ,draw opacity=1 ]   (431.5,347.5) -- (585.5,549.5) ;
\draw [color={rgb, 255:red, 74; green, 144; blue, 226 }  ,draw opacity=1 ]   (437.5,273.5) -- (609.5,502.5) ;
\draw [color={rgb, 255:red, 74; green, 144; blue, 226 }  ,draw opacity=1 ]   (446.5,194.5) -- (622.5,444.5) ;
\draw [color={rgb, 255:red, 74; green, 144; blue, 226 }  ,draw opacity=1 ]   (513.5,205.5) -- (631.5,376.5) ;
\draw [color={rgb, 255:red, 241; green, 159; blue, 26 }  ,draw opacity=1 ][line width=1.5]    (221.5,173) .. controls (216.5,127) and (213.5,93) .. (226.5,81) .. controls (239.5,69) and (267.5,44) .. (322.5,45) .. controls (377.5,46) and (403.5,60) .. (420.5,78) .. controls (437.5,96) and (428.5,145) .. (426.5,177) ;
\draw [color={rgb, 255:red, 126; green, 211; blue, 33 }  ,draw opacity=1 ][line width=1.5]  [dash pattern={on 1.69pt off 2.76pt}]  (251.83,500.17) -- (268.12,587.38) ;
\draw [color={rgb, 255:red, 126; green, 211; blue, 33 }  ,draw opacity=1 ][line width=1.5]  [dash pattern={on 1.69pt off 2.76pt}]  (416,505) -- (407.5,590.5) ;
\draw [color={rgb, 255:red, 126; green, 211; blue, 33 }  ,draw opacity=1 ][line width=1.5]  [dash pattern={on 1.69pt off 2.76pt}]  (268.12,587.38) .. controls (314.95,569.21) and (373.5,578.5) .. (407.5,590.5) ;
\draw    (218.5,124) .. controls (258.5,94) and (330.5,59) .. (428.5,131) ;
\draw    (318.52,93) -- (318.98,133) ;
\draw [shift={(319,135)}, rotate = 269.35] [color={rgb, 255:red, 0; green, 0; blue, 0 }  ][line width=0.75]    (10.93,-3.29) .. controls (6.95,-1.4) and (3.31,-0.3) .. (0,0) .. controls (3.31,0.3) and (6.95,1.4) .. (10.93,3.29)   ;
\draw [shift={(318.5,91)}, rotate = 89.35] [color={rgb, 255:red, 0; green, 0; blue, 0 }  ][line width=0.75]    (10.93,-3.29) .. controls (6.95,-1.4) and (3.31,-0.3) .. (0,0) .. controls (3.31,0.3) and (6.95,1.4) .. (10.93,3.29)   ;
\draw    (196.5,190.5) -- (251.83,500.17) ;
\draw    (446.5,194.5) -- (416,505) ;

\draw (155,130.4) node [anchor=north west][inner sep=0.75pt]  [color={rgb, 255:red, 208; green, 2; blue, 27 }  ,opacity=1 ]  {$\Sigma _{1}$};
\draw  [color={rgb, 255:red, 255; green, 255; blue, 255 }  ,draw opacity=1 ][fill={rgb, 255:red, 255; green, 255; blue, 255 }  ,fill opacity=1 ]  (85.5,472.5) -- (128.5,472.5) -- (128.5,526.5) -- (85.5,526.5) -- cycle  ;
\draw (100.5,480.9) node [anchor=north west][inner sep=0.75pt]  {$\Omega $};
\draw (15,195.4) node [anchor=north west][inner sep=0.75pt]  {$\partial \Omega $};
\draw (305,179.4) node [anchor=north west][inner sep=0.75pt]    [font=\footnotesize]{$\textcolor[rgb]{0.81,0.75,0}{D}\textcolor[rgb]{0.81,0.75,0}{_{1}}$};
\draw (305,245.73) node [anchor=north west][inner sep=0.75pt]    [font=\footnotesize]{$\textcolor[rgb]{0.81,0.75,0}{D}\textcolor[rgb]{0.81,0.75,0}{_{2}}$};
\draw (305,300) node [anchor=north west][inner sep=0.75pt]    [font=\footnotesize]{$\textcolor[rgb]{0.81,0.75,0}{D}\textcolor[rgb]{0.81,0.75,0}{_{3}}$};
\draw (335.78,351.35) node [anchor=north west][inner sep=0.75pt]  [rotate=-88.83]  {$\textcolor[rgb]{0.75,0.7,0.03}{......}$};
\draw  [color={rgb, 255:red, 255; green, 255; blue, 255 }  ,draw opacity=1 ][fill={rgb, 255:red, 255; green, 255; blue, 255 }  ,fill opacity=1 ]  (487,413) -- (513,413) -- (513,452) -- (487,452) -- cycle  ;
\draw (490,412.4) node [anchor=north west][inner sep=0.75pt]   {$\textcolor[rgb]{0.29,0.56,0.89}{E}$};
\draw (160,50) node [anchor=north west][inner sep=0.75pt]    [font=\footnotesize]{$\textcolor[rgb]{0.95,0.62,0.1}{D}\textcolor[rgb]{0.95,0.62,0.1}{_{0}}$};
\draw (290.07,440.4) node [anchor=north west][inner sep=0.75pt]  [font=\footnotesize][xslant=-0.02]  {$\textcolor[rgb]{0.36,0.65,0.05}{\Sigma }\textcolor[rgb]{0.36,0.65,0.05}{_{K+1}}$};
\draw (290,525.19) node [anchor=north west][inner sep=0.75pt]    [font=\footnotesize]{$\textcolor[rgb]{0.81,0.75,0}{D}\textcolor[rgb]{0.81,0.75,0}{_{K+1}}$};
\draw (303,52.4) node [anchor=north west][inner sep=0.75pt]    [font=\tiny]{$\textcolor[rgb]{0.95,0.62,0.1}{(}\textcolor[rgb]{0.95,0.62,0.1}{D}\textcolor[rgb]{0.95,0.62,0.1}{_{0}}\textcolor[rgb]{0.95,0.62,0.1}{)}\textcolor[rgb]{0.95,0.62,0.1}{_{r}}$};
\draw (321,104) node [anchor=north west][inner sep=0.75pt]   [align=left] {r};
\end{tikzpicture}

\caption{Schematic figure displaying the non-physical domain $D_0$, subdomain $(D_0)_r$ and the augmented domain $\Omega_0$.}
\end{figure}

\begin{claim}\label{claim}
    If $\mathcal{N}^{\Sigma_{1}}_{\sigma^{(1)}, q^{(1)}} = \mathcal{N}^{\Sigma_{1}}_{\sigma^{(2)}, q^{(2)}}$, $\sigma^{(1)} = \sigma^{(2)}$ and $q^{(1)} = q^{(2)}$ in $D$, then  
    \begin{equation}\label{claim 6}
    \mathcal{N}^{\Sigma_{K+1}}_{\sigma^{(1)}, q^{(1)}} =  \mathcal{N}^{\Sigma_{K+1}}_{\sigma^{(2)}, q^{(2)}}.
    \end{equation}
\end{claim}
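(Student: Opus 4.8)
The plan is to translate the hypothesis into the vanishing of an integral bilinear form over $E$ via Alessandrini's identity, and then to promote this, through a Runge-type approximation, from the narrow class of functions we actually control --- restrictions to $E$ of solutions in the augmented domain $\Omega_0$ --- to the full class of $H^1(E)$ solutions of $L_iv=0$ with data localised on $\Sigma_{K+1}$.

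First I would apply Alessandrini's identity \eqref{Alessandrini} in $\Omega$: the hypothesis $\mathcal{N}^{\Sigma_{1}}_{\sigma^{(1)}, q^{(1)}} = \mathcal{N}^{\Sigma_{1}}_{\sigma^{(2)}, q^{(2)}}$ yields
\[
\int_\Omega\big(\sigma^{(1)}-\sigma^{(2)}\big)\nabla u_1\cdot\nabla u_2+\big(q^{(1)}-q^{(2)}\big)u_1u_2\,\mathrm{d}x=0
\]
for every pair $u_1,u_2\in H^1(\Omega)$ with $L_iu_i=0$ in $\Omega$ and Neumann data supported in $\overline{\Sigma_1}$. Since $\sigma^{(1)}=\sigma^{(2)}$ and $q^{(1)}=q^{(2)}$ in $D$, the integrand is supported in $E$ and the identity reduces to the same integral over $E$. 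The key observation is that each extended Neumann kernel $\tilde N^{(i)}(\cdot,y)$ with pole $y\in(D_0)_r$ is admissible: the pole lies away from $\overline\Omega$, the coefficients $\tilde\sigma^{(i)},\tilde q^{(i)}$ coincide with $\sigma^{(i)}_1,q^{(i)}_1$ across $\Sigma_1$, and $\tilde\sigma^{(i)}\nabla\tilde N^{(i)}(\cdot,y)\cdot\nu=0$ on $\partial\Omega_0$; hence $\tilde N^{(i)}(\cdot,y)|_\Omega\in H^1(\Omega)$ solves $L_i=0$ in $\Omega$ with co-normal derivative on $\partial\Omega$ supported in $\overline{\Sigma_1}$. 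So the reduced identity holds, in particular, for all $u_i=\tilde N^{(i)}(\cdot,y_i)|_E$ with $y_i\in(D_0)_r$.

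Next I would rewrite the target \eqref{claim 6} through Alessandrini's identity for the domain $E$ and the portion $\Sigma_{K+1}\subset\partial E$: the asserted equality of the localised maps on $\Sigma_{K+1}$ is equivalent to
\[
\int_E\big(\sigma^{(1)}-\sigma^{(2)}\big)\nabla v_1\cdot\nabla v_2+\big(q^{(1)}-q^{(2)}\big)v_1v_2\,\mathrm{d}x=0
\]
for all $v_1,v_2\in H^1(E)$ with $L_iv_i=0$ in $E$ and Neumann data localised on $\Sigma_{K+1}$. As these $v_i$ lie in $\mathcal{S}_i:=\{v\in H^1(E):L_iv=0\text{ in }E\}$, it is enough to show the bilinear form above vanishes on $\mathcal{S}_1\times\mathcal{S}_2$; since we already know (previous paragraph, taking $u_i=\tilde N^{(i)}(\cdot,y_i)|_\Omega$) that it vanishes on every pair of restricted kernels, by bilinearity it vanishes on the product of their linear spans, and it therefore suffices to prove the Runge-type density statement that the linear span of $\{\tilde N^{(i)}(\cdot,y)|_E:y\in(D_0)_r\}$ is dense in $\mathcal{S}_i$ in $L^2(E)$ --- and, by interior elliptic estimates together with the $C^{1,\alpha}$ regularity of the coefficients and of $\Sigma_{K+1}$, in $H^1$ on a neighbourhood of $\mathrm{supp}(\sigma^{(1)}-\sigma^{(2)})$, which makes the Dirichlet-energy term continuous along the approximation. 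This density I would obtain by Hahn--Banach and duality: if $f\in L^2(E)$ annihilates every $\tilde N^{(i)}(\cdot,y)|_E$, extend $f$ by zero to $\Omega_0$ and let $w\in H^1(\Omega_0)$ solve $L_iw=f$ in $\Omega_0$ with $\tilde\sigma^{(i)}\nabla w\cdot\nu=0$ on $\partial\Omega_0$; the reproducing property of the Neumann kernel gives $w(y)=\int_E\tilde N^{(i)}(x,y)f(x)\,\mathrm{d}x=0$ for all $y\in(D_0)_r$. Since $f$ is supported in $E$, $w$ solves $L_iw=0$ on the connected open set $D_0\cup D$ and vanishes on the open subset $(D_0)_r$, so the unique continuation property for $L_i$ across the non-flat $C^{1,\alpha}$ interfaces of the partition forces $w\equiv0$ on $D_0\cup D$; then $w$ and its co-normal derivative vanish on the part of $\partial E$ adjacent to $D$, whence $f=L_iw\equiv0$ in $E$. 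With the density in hand, the reduced identity passes from the kernels to all of $\mathcal{S}_1\times\mathcal{S}_2$ by continuity, and \eqref{claim 6} follows.

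The step I expect to be the main obstacle is exactly this Runge approximation, and within it the unique continuation property for the piecewise-constant operators $L_i$ across the non-flat $C^{1,\alpha}$ interfaces --- one needs a solution vanishing on the open set $(D_0)_r$ to vanish throughout the connected chain $D_0\cup D$ in spite of the coefficient jumps --- together with the regularity of the approximating solutions up to $\Sigma_{K+1}$ required to keep the Dirichlet-energy bilinear form continuous under $L^2(E)$ convergence. A minor separate point is the degenerate sub-case $q^{(1)}\equiv q^{(2)}\equiv0$ on $E$, where the local map of $E$ must be read on mean-zero data modulo constants exactly as in the $q=0$ treatment of \cite{Al2017}; the argument is unaffected, since then $q^{(1)}-q^{(2)}\equiv0$ on $E$.
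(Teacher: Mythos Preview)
Your strategy is a genuine alternative to the paper's, and both hinge on the same unique continuation across the $C^{1,\alpha}$ interfaces, but applied to different objects. The paper never invokes Runge density: it defines
\[
S(y,z)=\int_E\big(\sigma^{(1)}-\sigma^{(2)}\big)\nabla_x\tilde N^{(1)}(y,x)\cdot\nabla_x\tilde N^{(2)}(z,x)+\big(q^{(1)}-q^{(2)}\big)\tilde N^{(1)}(y,x)\tilde N^{(2)}(z,x)\,\mathrm{d}x
\]
for $y,z\in D\cup D_0$, shows $S\equiv0$ on $(D_0)_r\times(D_0)_r$ by Alessandrini's identity, and then propagates $S\equiv0$ throughout $D\times D$ by unique continuation in each variable (since $L_1S(\cdot,z)=0$ and $L_2S(y,\cdot)=0$). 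The target integral $\int_E(\sigma^{(1)}-\sigma^{(2)})\nabla u^{(1)}\cdot\nabla u^{(2)}+(q^{(1)}-q^{(2)})u^{(1)}u^{(2)}$ is then expressed \emph{exactly}, via a Green representation of $u^{(i)}$ in terms of $\tilde N^{(i)}$, as a finite sum of integrals of $S$ and its $y$- and $z$-derivatives over $D\times D$, $D\times\Sigma_{K+1}$, etc., each of which vanishes. No approximation, no limit.

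Your Runge route is sound in outline but has a gap precisely where you flag it, and the gap is more serious than you indicate. The support of $\sigma^{(1)}-\sigma^{(2)}$ in $\overline E$ is generically all of $\overline E$: nothing in the hypotheses forces the coefficients to agree anywhere in $E$. Hence the Dirichlet-energy term in the bilinear form is an honest $H^1(E)\times H^1(E)$ pairing, and $L^2(E)$ density of the kernel span in $\mathcal S_i$ does not by itself make that term continuous along the approximation. Caccioppoli upgrades $L^2$ convergence of solutions only to $H^1_{\mathrm{loc}}(E)$; to reach $H^1$ up to $\partial E\cap\Omega$ you would need matching co-normal data there, but the approximants $\tilde N^{(i)}(\cdot,y)|_E$ carry a nontrivial transmission flux on that interface while the targets $v_i$ have flux $\psi$ supported only on $\Sigma_{K+1}$. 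The ``interior elliptic estimates together with the $C^{1,\alpha}$ regularity of $\Sigma_{K+1}$'' you invoke do not bridge this. A separate small slip: from $w\equiv0$ on $D_0\cup D$ you cannot conclude $f=L_iw\equiv0$ in $E$; what follows (and what you actually need) is only $\int_E fv=0$ for every $v\in\mathcal S_i$ with Neumann data supported on $\Sigma_{K+1}$, via Green's identity on $E$ using the vanishing Cauchy data of $w$ on $\partial E\cap\overline D$.

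The paper's representation-formula approach buys exactly the avoidance of this $H^1$-continuity issue: it never passes to a limit in the gradient term. Your approach, if the $H^1(E)$ density were established (which is possible but requires substantially more than you sketch), would be somewhat more modular; as written, however, the argument is incomplete at its declared weak point.
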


\begin{proof}[Proof of Claim \ref{claim}]
   $\;$ Here we use a similar argument of \cite{Al2017}, \cite{alessandrini2012single}. 
        Given $\psi \in C^{0,1}(\partial E)$, with $\mbox{supp} \psi \subset \Sigma_{K+1}$, let $u^{(i)}$ solve 
         \begin{equation}
 \Bigg\{\begin{array}{llll}
       L_i u^{(i)}=0, \quad &\text{in} \quad  E,\\
       \sigma^{(i)} \nabla u^{(i)}\cdot \nu = \psi, \quad &\text{on} \quad \partial E.
        \end{array}
        \end{equation}
As in \cite{Al2017}, we consider a bounded extension operator
\[T: H^{\frac{1}{2}} (\partial E \cap \Omega) \longrightarrow H^1 (\Omega),\]
such that, given $f \in H^{\frac{1}{2}}(\partial E \cap \Omega)$, we have 
\[T f|_{\Sigma_1} = 0,\]
and define
\begin{equation}
    \overline{u}^{(i)} = \Bigg\{\begin{array}{llll}
    u^{(i)}, \quad \text{in} \quad E,\\
    T(u^{(i)}|_{\partial E \cap \Omega}), \quad \text{in} \quad D.
      \end{array}
\end{equation}
Observe that $\overline{u}^{(i)}\in H^1(\Omega)$, hence for $x \in E$, we obtain
\begin{equation}\label{u representation}
     \begin{split}
         \overline{u}^{(i)}(x) =&- \int_\Omega \overline{u}^{(i)}(y) \mbox{div}_y(\sigma^{(i)} (y) \nabla_y \tilde{N}^{(i)}(y,x) \:\textnormal{d}y + \int_\Omega \overline{u}^{(i)}(y) q^{(i)}(y) \tilde{N}^{(i)}(y,x) \:\textnormal{d}y\\
           =&  \int_{\Sigma_{K+1}}  \psi \tilde{N}^{(i)}(y,x)  \:\textnormal{d}S(y)+ \int_D \sigma^{(i)} (y) \nabla_y \overline{u}^{(i)}(y) \cdot \nabla_y \tilde{N}^{(i)}(y,x) \:\textnormal{d}y \\
          &+\int_D \overline{u}^{(i)}(y) q^{(i)} (y)\tilde{N}^{(i)}(y,x) \:\textnormal{d}y.
     \end{split}
 \end{equation}
Introducing the quantities
 \[F_{\sigma}(x,y,z) = \big(\sigma^{(1)} - \sigma^{(2)}\big)(x)\nabla_x \tilde{N}^{(1)}(y,x) \cdot \nabla_x \tilde{N}^{(2)}(z,x), \]
 \[ F_{q}(x,y,z) = \big(q^{(1)} - q^{(2)}\big)(x)\tilde{N}^{(1)}(y,x) \tilde{N}^{(2)}(z,x),\]
 and differentiating under the integrals in \eqref{u representation}, we form for $x\in E$, 
 \begin{equation*}
     \begin{split}
        & \big(\sigma^{(1)} - \sigma^{(2)}\big)(x)\nabla_x  u^{(1)}(x) \cdot \nabla_x  u^{(2)}(x)\\
        = &  \int_{\Sigma_{K+1} \times \Sigma_{K+1}} \!\!\!\!\!\!\!\!\!\!\!\!\!\!\!\!\!\!\!\!\!\! \psi(y) \psi (z) F_{\sigma}(x,y,z)  \:\textnormal{d}y \:\textnormal{d}z\\
+ & \int_{\Sigma_{K+1} \times D} \!\!\!\!\!\!\!\!\!\!\!\!\!\!\!\! \psi(y) \big[ \sigma^{(2)}_{lk} (z) \partial_{z_l} \overline{u}^{(2)}(z) \partial_{z_k} F_{\sigma}(x,y,z) +q^{(2)} (z) \overline{u}^{(2)}(z) F_{\sigma}(x,y,z) \big] \:\textnormal{d}y \:\textnormal{d}z\\
+ & \int_{D \times \Sigma_{K+1}}\!\!\!\!\!\!\!\!\!\!\!\!\!\!\!\!  \psi(z) \big[ \sigma^{(1)}_{lk} (y) \partial_{y_l} \overline{u}^{(1)}(y) \partial_{y_k} F_{\sigma}(x,y,z) + q^{(1)} (y) \overline{u}^{(1)}(y)  F_{\sigma}(x,y,z) \big] \:\textnormal{d}y \:\textnormal{d}z\\
 + & \int_{D \times D} \!\!\!\!\!\!\!\! \overline{u}^{(1)}(y) \overline{u}^{(2)}(z) q^{(1)} (y) q^{(2)} (z)F_{\sigma}(x,y,z)  \:\textnormal{d}y \:\textnormal{d}z\\
    + & \int_{D \times D}\!\!\!\!\!\!\!\! \sigma^{(1)}_{lk} (y) q^{(2)} (z) \partial_{y_l} \overline{u}^{(1)}(y) \overline{u}^{(2)}(z) \partial_{y_k}  F_{\sigma}(x,y,z)  \:\textnormal{d}y \:\textnormal{d}z\\
 + & \int_{D \times D} \!\!\!\!\!\!\!\!\sigma^{(2)}_{lk} (z) \overline{u}^{(1)}(y) q^{(1)} (y) \partial_{z_l} \overline{u}^{(2)}(z)  \partial_{z_k}  F_{\sigma}(x,y,z)  \:\textnormal{d}y \:\textnormal{d}z\\
 + & \int_{D \times D}\!\!\!\!\!\!\!\!\sigma^{(2)}_{lk} (z) \partial_{z_l} \overline{u}^{(2)}(z) \sigma^{(1)}_{nm} (y) \partial_{y_n} \overline{u}^{(1)}(y) \partial_{z_k} \partial_{y_l} F_{\sigma}(x,y,z)  \:\textnormal{d}y \:\textnormal{d}z,\\
     \end{split}
 \end{equation*}
 where the summation convention over repeated indices has been adopted. We also have 
 \begin{equation*}
     \begin{split}
        &\big(q^{(1)} - q^{(2)}\big)(x) u^{(1)}(x) u^{(2)}(x) \\
        =& \int_{\Sigma_{K+1} \times \Sigma_{K+1}} \!\!\!\!\!\!\!\!\!\!\!\!\!\!\!\!\!\!\!\!\!\!\!\!\psi(y) \psi (z)  F_{q}(x,y,z)  \:\textnormal{d}y \:\textnormal{d}z\\
 + &\int_{\Sigma_{K+1} \times D} \!\!\!\!\!\!\!\!\!\!\!\!\!\!\!\!\psi(y) \big[ \sigma^{(2)}_{lk} (z) \partial_{z_l} \overline{u}^{(2)}(z) \partial_{z_k} F_{q}(x,y,z) +q^{(2)} (z) \overline{u}^{(2)}(z) F_{q}(x,y,z)\big] \:\textnormal{d}y \:\textnormal{d}z\\
 + &\int_{D \times \Sigma_{K+1}} \!\!\!\!\!\!\!\!\!\!\!\!\!\!\!\! \psi(z) \big[ \sigma^{(1)}_{lk} (y) \partial_{y_l} \overline{u}^{(1)}(y) \partial_{y_k} F_{q}(x,y,z) + q^{(1)}(y) \overline{u}^{(1)}(y)  F_{q}(x,y,z) \big]  \:\textnormal{d}y \:\textnormal{d}z\\
 + &\int_{D \times D}\!\!\!\!\!\!\!\! \overline{u}^{(1)}(y) \overline{u}^{(2)}(z) q^{(1)} (y) q^{(2)} (z)F_{q}(x,y,z) \:\textnormal{d}y \:\textnormal{d}z\\
    + &\int_{D \times D} \sigma^{(1)}_{lk} (y) q^{(2)} (z) \partial_{y_l} \overline{u}^{(1)}(y) \overline{u}^{(2)}(z) \partial_{y_k}  F_{q}(x,y,z) \:\textnormal{d}y \:\textnormal{d}z\\
 + &\int_{D \times D}\!\!\!\!\!\!\!\! \sigma^{(2)}_{lk} (z) \overline{u}^{(1)}(y) q^{(1)} (y) \partial_{z_l} \overline{u}^{(2)}(z)  \partial_{z_k}  F_{q}(x,y,z) \:\textnormal{d}y \:\textnormal{d}z\\
    + &\int_{D \times D}\!\!\!\!\!\!\!\! \sigma^{(2)}_{lk} (z) \partial_{z_l} \overline{u}^{(2)}(z) \sigma^{(1)}_{nm} (y) \partial_{y_n} \overline{u}^{(1)}(y) \partial_{z_k} \partial_{y_l} F_{q}(x,y,z) \:\textnormal{d}y \:\textnormal{d}z.\\
     \end{split}
 \end{equation*}
We define for $y,z \in D \cup D_0$, 
 \begin{eqnarray}\label{Singular solutions}
     S(y,z) &=& \int_{E} \big(\sigma^{(1)} - \sigma^{(2)}\big)(x) \nabla_x \tilde{N}^{(1)}(y,x) \cdot \nabla_x \tilde{N}^{(2)}(z,x)\: \textnormal{d}x\\
     &+& \int_{E} \big(q^{(1)}- q^{(2)}\big)(x)\tilde{N}^{(1)}(y,x) \tilde{N}^{(2)}(z,x)\: \textnormal{d}x
 \end{eqnarray}
and observe that for any $y,z \in (\Bar{D} \cup \Bar{D}_0)^\circ$, we have
\begin{equation}
    \begin{split}
       & -\mbox{div}_y\big(\sigma^{(1)}(y) \nabla_y S(y,z)\big) +q^{(1)}(y) S(y,z)= 0,\\
       & -\mbox{div}_z\big(\sigma^{(2)}(z) \nabla_z S(y,z)\big) +q^{(2)}(z) S(y,z)= 0.
    \end{split}
\end{equation}
Since $\sigma^{(1)} = \sigma^{(2)}$ and $q^{(1)} = q^{(2)}$ on $D$, by assumption, we also have
 \begin{equation}
 \begin{split}
     S(y,z) = &\int_\Omega \Big(\sigma^{(1)}-\sigma^{(2)}\Big)(x) \nabla_x \tilde{N}^{(1)}(y,x) \cdot \nabla_x \tilde{N}^{(2)}(z,x) \: \textnormal{d}x \\
     &+ \int_\Omega\Big(q^{(1)}-q^{(2)}\Big)(x)  \tilde{N}^{(1)}(y,x)  \tilde{N}^{(2)}(z,x) \: \textnormal{d}x.
      \end{split}
 \end{equation}
Thus, for $y, z \in (D_0)_r$, 
\begin{equation}\label{Alessandrini id for k-1}
    S(y,z) = \Big\langle \sigma^{(1)} \nabla \tilde{N}^{(1)}(y,\cdot)\cdot \nu, \Big(\mathcal{N}^{\Sigma_1}_{\sigma^{(2)}} - \mathcal{N}^{\Sigma_1}_{\sigma^{(1)}}\Big) \sigma^{(2)} \nabla \tilde{N}^{(2)}(y,\cdot)\cdot \nu \Big\rangle = 0.
\end{equation}
Due to the $C^{1,\alpha}$ regularity of the interfaces $\Sigma_{j_k}$ within D, $S(y,z)$ satisfies the unique continuation property in each variable $y,z\in (\overline{D} \cup \overline{D}_0)^\circ$ \cite[Theorem 1.1]{li2003estimates}, therefore
\begin{equation}
    S(y,z) =0, \qquad \text{for any $y,z \in D$.}
\end{equation}
Hence, 
\begin{equation}
    \begin{split}
       & \int_E \Big[\big(\sigma^{(1)}-\sigma^{(2)}\big)(x) \nabla_x u^{(1)}(x) \cdot \nabla_x u^{(2)}(x) + \big(q^{(1)}-q^{(2)}\big)(x)  u^{(1)}(x)  u^{(2)}(x)\Big] \:\textnormal{d}x\\
    = & \int_{\Sigma_{K+1} \times \Sigma_{K+1}} \!\!\!\!\!\!\!\! \!\!\!\!\!\!\!\!  \psi(y) \psi (z) S(y,z) \:\textnormal{d}y \: \textnormal{d}z \\
 + &\int_{D \times \Sigma_{K+1}}\!\!\!\!\!\!\!\!  \psi(z)\Big[  \sigma^{(1)}_{lk} (y) \partial_{y_l} \overline{u}^{(1)}(y) \partial_{y_k} S(y,z)  + \overline{u}^{(1)}(y) q^{(1)} (y) S(y,z)\Big]\: \textnormal{d}y \:\textnormal{d}z\\
   + &\int_{\Sigma_{K+1} \times D} \!\!\!\!\!\!\!\!  \psi(y)\Big[ q^{(2)} (z) \overline{u}^{(2)}(z) S(y,z) +  \sigma^{(2)}_{lk} (z) \partial_{z_l} \overline{u}^{(2)}(z) \partial_{z_k} S(y,z) \Big] \:\textnormal{d}y \:\textnormal{d}z\\ 
    + &\int_{D \times D}  \overline{u}^{(1)}(y) \overline{u}^{(2)}(z) q^{(1)} (y) q^{(2)} (z)S(y,z) \:\textnormal{d}y \:\textnormal{d}z\\
    + &\int_{D \times D}\sigma^{(1)}_{lk} (y) q^{(2)} (z) \partial_{y_l} \overline{u}^{(1)}(y)\overline{u}^{(2)}(z)\partial_{y_k} S(y,z)\: \textnormal{d}y \: \textnormal{d}z\\
   + &\int_{D \times D} \sigma^{(2)}_{lk} (z) q^{(1)} (y) \overline{u}^{(1)}(y)\partial_{z_l} \overline{u}^{(2)}(z)    \partial_{z_k}  S(y,z) \:\textnormal{d}y \:\textnormal{d}z\\
 + &\int_{D \times D} \sigma^{(2)}_{lk} (z) \partial_{z_l} \overline{u}^{(2)}(z) \sigma^{(1)}_{nm} (y) \partial_{y_n} \overline{u}^{(1)}(y) \partial_{z_k} \partial_{y_l} S(y,z) \:\textnormal{d}y \:\textnormal{d}z=0.
    \end{split}
\end{equation}
Finally, we obtain, 
\begin{equation}
\begin{split}
& \Big\langle \psi, \Big(\mathcal{N}^{\Sigma_{K+1}}_{\sigma^{(2)}} - \mathcal{N}^{\Sigma_{K+1}}_{\sigma^{(1)}}\Big)\psi \Big\rangle \\
   & = \int_E \big(\sigma^{(1)} - \sigma^{(2)}\big)(x) \nabla_x u^{(1)}(x) \cdot \nabla_x u^{(2)}(x) + \big(q^{(1)} - q^{(2)}\big)(x)  u^{(1)}(x)  u^{(2)}(x) \:\textnormal{d}x = 0,
    \end{split}
\end{equation}
which concludes the proof of Claim \ref{claim}.
\end{proof}
From $\mathcal{N}^{\Sigma_{K+1}}_{\sigma^{(2)}} = \mathcal{N}^{\Sigma_{K+1}}_{\sigma^{(1)}}$ and by Lemma \ref{determination of sigma near boundary}, we obtain
\[\sigma^{(1)}_{K+1} = \sigma^{(2)}_{K+1}, \qquad \text{on\quad $ \Sigma_{K+1}$,}\]
and hence 
\[\sigma^{(1)}_{K+1} = \sigma^{(2)}_{K+1} \quad \mbox{and} \quad q^{(1)}_{K+1} = q^{(2)}_{K+1}, \qquad \text{in\quad $D_{K+1}$.}\]

\end{proof}


\appendix
\section{Appendix}\label{sec6}
We prove here Corollary \ref{Nested domain theorem}. We also state and prove a slight variation of Corollary \ref{Nested domain theorem}, where $\sigma$ and $q$ in \eqref{L} are uniquely determined only locally in $\Omega$ within a layered subregion $C$ of $\Omega$, from the relevant local N-D map. No layered assumption is made on $\Omega\setminus\overline{C}$ .

\begin{proof}[Proof of Corollary \ref{Nested domain theorem}] $\;$ 
Without loss of generality, let $N_1 = \min \{ N_1, N_2\}$. We note that the case $j=0$, $\Omega_0^{(1)} = \Omega_0^{(2)} = \Omega$ is trivially true. Let
    \[ \mathcal{N}^\Sigma_{\sigma^{(1)} , q^{(1)}} = \mathcal{N}^\Sigma_{\sigma^{(2)} , q^{(2)}},\]
   and $E_1$ be the connected component of $\Omega \backslash \overline{(\Omega_1^{(1)} \cup \Omega_1^{(2)})}$, such that $\Sigma \subset \partial E_1$. By Theorem \ref{Main theorem of uniqueness}, we obtain
    \begin{equation}
        \sigma_{1}^{(1)} = \sigma_{1}^{(2)} \quad \textnormal{and} \quad q_{1}^{(1)}= q_{1}^{(2)}, \quad \text{in $E_1$.}
    \end{equation}
We now proceed by induction for $1 \leq k \leq N_1$. Assume for every $j = 1,\dots, k-1$
\begin{equation} \label{induction assumption}
      \Omega_j^{(1)} = \Omega_j^{(2)}, \quad  \sigma_{j+1}^{(1)} = \sigma_{j+1}^{(2)} \quad \textnormal{and} \quad q_{j+1}^{(1)}= q_{j+1}^{(2)}, 
\end{equation}
and assume by contradiction that 
\begin{equation}\label{contradiction}
    \partial \Omega^{(1)}_k \backslash \overline{\Omega^{(2)}_k} \neq \emptyset.
\end{equation}
Let $E_k$ be the connected component of $\Omega \backslash \overline{(\Omega_k^{(1)} \cup \Omega_k^{(2)})}$, such that $\Sigma \subset \partial E_k$. Denote $\Sigma_{k+1}$ an open portion of $(\partial \Omega^{(1)}_k \backslash \overline{\Omega^{(2)}_k}) \cap \partial E_k$ and choose a subdomain $\mathcal{E}_k \subset E_k$ such that $\mathcal{E}_k $ and the subdomain $\mathcal{F}_k = \Omega \backslash \overline{\mathcal{E}_k}$ both have Lipschitz boundary and $\Sigma \cup \Sigma_{k+1} \subset \partial \mathcal{E}_k$ (see Fig. \ref{Graph of subdomains for nested domains} illustrating the case $k=1$). In $\mathcal{E}_k$, we have that $\sigma^{(1)} = \sigma^{(2)}$ and $q^{(1)} = q^{(2)}$. We denote the local N-D map for $\sigma^{(i)}$, $q^{(i)}$ in $\mathcal{F}_k$ by 
\[\mathcal{N}^{\Sigma_{k+1}}_{\sigma^{(i)} , q^{(i)}}.\] 
Then by Claim \ref{claim}, we have that 
\begin{equation}\label{N-D maps coinciding}
    \mathcal{N}^{\Sigma_{k+1}}_{\sigma^{(1)} , q^{(1)}} = \mathcal{N}^{\Sigma_{k+1}}_{\sigma^{(2)} , q^{(2)}},
\end{equation}
where we have replaced set $D$ in Claim \ref{claim} with $\mathcal{E}_k$. \\


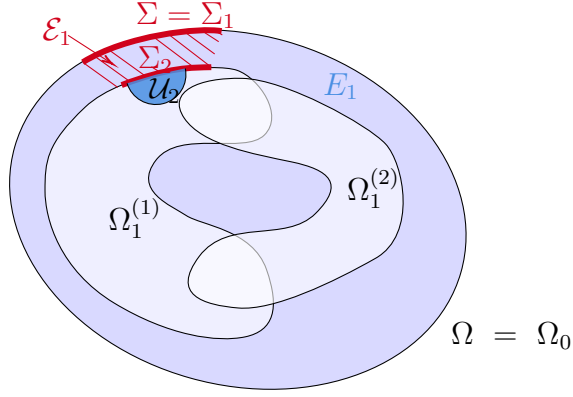
\begin{figure}[!h] 
\centering
\begin{tikzpicture}[x=0.75pt,y=0.75pt,yscale=-.5,xscale=.5]

\draw  [fill=blue!50,  fill opacity=.3]  (109.86,143.08) .. controls (136.49,49.6) and (257.75,2.21) .. (380.72,37.23) .. controls (503.68,72.26) and (581.77,176.44) .. (555.14,269.92) .. controls (528.51,363.4) and (407.25,410.79) .. (284.28,375.77) .. controls (161.32,340.74) and (83.23,236.56) .. (109.86,143.08) -- cycle ;
\draw  [fill=white!50, fill opacity=.6]   (332,66.5) .. controls (354,74.5) and (379,111.5) .. (359,131.5) .. controls (339,151.5) and (275,125.5) .. (251,156.5) .. controls (227,187.5) and (266,202.5) .. (281,212.5) .. controls (296,222.5) and (347,226.5) .. (361,266.5) .. controls (375,306.5) and (368,328.5) .. (340,334.5) .. controls (312,340.5) and (263,333.5) .. (209,300.5) .. controls (155,267.5) and (140,216.5) .. (140,187.5) .. controls (140,158.5) and (141,140.5) .. (180,103.5) .. controls (219,66.5) and (310,58.5) .. (332,66.5) -- cycle ;
\draw  [fill=white!50, fill opacity=.6]  (337,74.5) .. controls (380,75.5) and (462,121) .. (480,140.5) .. controls (498,160) and (512,231.5) .. (470,255.5) .. controls (428,279.5) and (368,308.5) .. (333,305.5) .. controls (298,302.5) and (265,292.5) .. (294,253.5) .. controls (323,214.5) and (380,240.5) .. (410,210.5) .. controls (440,180.5) and (424,166.5) .. (386,158.5) .. controls (348,150.5) and (297,141.5) .. (279,114.5) .. controls (261,87.5) and (294,73.5) .. (337,74.5) -- cycle;
\draw [color={rgb, 255:red, 208; green, 2; blue, 27 }  ,draw opacity=1 ][line width=2.25]    (177.67,58.33) .. controls (221,31) and (294.33,23) .. (316.33,27);
\draw [color={rgb, 255:red, 208; green, 2; blue, 27 }  ,draw opacity=1 ][line width=2.25]    (216.5,81.25) .. controls (264.5,62.75) and (286.5,63.75) .. (306,63.25) ;
\draw [color={rgb, 255:red, 208; green, 2; blue, 27 }  ,draw opacity=1 ]   (177.67,58.33) -- (212.6,83.6) ;
\draw [color={rgb, 255:red, 208; green, 2; blue, 27 }  ,draw opacity=1 ]   (280.07,28.33) -- (309.4,50) ;
\draw [color={rgb, 255:red, 208; green, 2; blue, 27 }  ,draw opacity=1 ]   (262.47,29.53) -- (306,63.25) ;
\draw [color={rgb, 255:red, 208; green, 2; blue, 27 }  ,draw opacity=1 ]   (247.67,32.33) -- (287.8,62.8) ;
\draw [color={rgb, 255:red, 208; green, 2; blue, 27 }  ,draw opacity=1 ]   (204.47,46.33) -- (239.4,71.6) ;
\draw [color={rgb, 255:red, 208; green, 2; blue, 27 }  ,draw opacity=1 ]   (189.67,52.33) -- (224.6,77.6) ;
\draw [color={rgb, 255:red, 208; green, 2; blue, 27 }  ,draw opacity=1 ]   (246.2,62.8) -- (256.2,69.2) ;
\draw [color={rgb, 255:red, 208; green, 2; blue, 27 }  ,draw opacity=1 ]   (294.87,25.53) -- (313.4,37.6) ;
 \draw [color={rgb, 255:red, 208; green, 2; blue, 27 }  ,draw opacity=1 ]   (310.47,27.13) -- (317,30.8) ;
\draw [color={rgb, 255:red, 208; green, 2; blue, 27 }  ,draw opacity=1 ]   (163,31.2) -- (205.72,58.91) ;
\draw [shift={(207.4,60)}, rotate = 212.97] [color={rgb, 255:red, 208; green, 2; blue, 27 }  ,draw opacity=1 ][line width=0.75]    (10.93,-3.29) .. controls (6.95,-1.4) and (3.31,-0.3) .. (0,0) .. controls (3.31,0.3) and (6.95,1.4) .. (10.93,3.29)   ;
\draw  [draw opacity=0][fill={rgb, 255:red, 74; green, 144; blue, 226 }  ,fill opacity=0.86 ] (279.99,64.48) .. controls (283.41,80.3) and (273.63,96.06) .. (257.81,99.94) .. controls (242.23,103.76) and (226.51,94.65) .. (221.94,79.46) -- (250.67,70.8) -- cycle ; \draw   (279.99,64.48) .. controls (283.41,80.3) and (273.63,96.06) .. (257.81,99.94) .. controls (242.23,103.76) and (226.51,94.65) .. (221.94,79.46) ;  
\draw (543,317.4) node [anchor=north west][inner sep=0.75pt]    {$\mathrm{\Omega \ =\ \Omega _{0}}$};
\draw (201,192.4) node [anchor=north west][inner sep=0.75pt]    {$\Omega _{1}^{( 1)}$};
\draw (439,162.4) node [anchor=north west][inner sep=0.75pt]    {$\Omega _{1}^{( 2)}$};
\draw (228,-5) node [anchor=north west][inner sep=0.75pt]  [color={rgb, 255:red, 208; green, 2; blue, 27 }  ,opacity=1 ]  {$\Sigma = \Sigma_1 $};
\draw (230,40) node [anchor=north west][inner sep=0.75pt]  [color={rgb, 255:red, 208; green, 2; blue, 27 }  ,opacity=1 ]  {$\Sigma _{2}$};
\draw (135,12) node [anchor=north west][inner sep=0.75pt]    {$\textcolor[rgb]{0.82,0.01,0.11}{\mathcal{E}_{1}}$};
\draw (415,66.4) node [anchor=north west][inner sep=0.75pt]    {$\textcolor[rgb]{0.29,0.56,0.89}{E_{1}}$};
\draw (242,72) node [anchor=north west][inner sep=0.75pt]   [font=\small]{$\mathcal{U}_{2}$};
\end{tikzpicture}
\caption{Schematic figure representing $\Omega_1^{(1)}$, $\Omega_1^{(2)}$, the connected component $E_1$ of $\Omega \backslash \overline{(\Omega_1^{(1)} \cup \Omega_1^{(2)})}$, such that $\Sigma \subset \partial E_1$, the subdomain $\mathcal{E}_1 \subset E_1$, the open portion $\Sigma_2\subset(\partial \Omega^{(1)}_1 \backslash \overline{\Omega^{(2)}_1}) \cap \partial E_1$ and the neighborhood $\mathcal{U}_2$ of $y_2\in\Sigma_2$.}\label{Graph of subdomains for nested domains}
\end{figure}
We now choose $y_{k+1} \in \Sigma_{k+1}$ and a neighbourhood of $y_{k+1}$, $\mathcal{U}_{k+1}$ in $\mathcal{F}_k$, such that $\mathcal{U}_{k+1} \cap \overline{\Omega^{(2)}_k} = \emptyset$. For $\mathcal{U}_{k+1}$ small enough, $\sigma^{(i)} , q^{(i)}$, for $i=1,2$ are constant in $\mathcal{U}_{k+1}$, thus, from Lemma \ref{determination of sigma near boundary} and Theorem \ref{Main theorem of uniqueness}, we have that 
\begin{equation}\label{sigma and q 1}
    \sigma_{k+1}^{(1)} = \sigma_{k}^{(2)}\quad\textnormal{and}\quad q_{k+1}^{(1)} = q_{k}^{(2)}
\end{equation}
and by pairing \eqref{sigma and q 1} with \eqref{induction assumption} we obtain
\begin{equation}\label{sigma and q 2}
     \sigma_{k+1}^{(1)} = \sigma_{k}^{(1)}\quad\textnormal{and}\quad q_{k+1}^{(1)} = q_{k}^{(1)},
\end{equation}
with \eqref{sigma and q 2} contradicting the \textit{jump condition} \eqref{jump condition 1}, therefore we must have 
\begin{equation}\label{equal domains}
    \Omega_k^{(1)} = \Omega_k^{(2)}.
\end{equation}
Now, by combining together \eqref{equal domains} and \eqref{N-D maps coinciding}, by Lemma \ref{determination of sigma near boundary} and Theorem \ref{Main theorem of uniqueness}, we also have that 
\begin{equation}\label{eq4.8}
     \sigma_{k+1}^{(1)} = \sigma_{k+1}^{(2)}\quad\textnormal{and}\quad q_{k+1}^{(1)} = q_{k+1}^{(2)},
\end{equation}
therefore \eqref{Domains coincide} holds for $j = 1,\dots,N_1-1$. In particular, we have that 
\begin{equation}\label{known on D^2_K_1+1}
      \Omega_{N_1 -1}^{(1)} = \Omega_{N_1-1}^{(2)}:= \Omega_{N_1-1}, \quad  \sigma_{{N_1}}^{(1)} = \sigma_{{N_1}}^{(2)} \quad \textnormal{and} \quad q_{{N_1}}^{(1)}= q_{{N_1}}^{(2)},  \quad \text{on $D^{(2)}_{N_1}$.}
\end{equation}
To show that $N_1 = N_2 := N$, we proceed again with a proof by contradiction. Assume that $N_2>N_1$ and define
\begin{equation}
    \widetilde{\Omega}^{(1)}_k = \Omega^{(2)}_k, \quad \text{$k = N_1, \dots, N_2$},
\end{equation}
\begin{equation}
    \widetilde{D}^{(1)}_k = D^{(2)}_k, \quad \text{$k = N_1, \dots, N_2$},
\end{equation}
\begin{equation}\label{sigma K_1}
    \widetilde{\sigma}^{(1)}_k = \sigma^{(1)}_{N_1}, \quad \text{$k = N_1, \dots, N_2$},
\end{equation}
\begin{equation}\label{V K_1+1}
    \widetilde{q}^{(1)}_k = q^{(1)}_{N_1}, \quad \text{$k = N_1, \dots, N_2$}.
\end{equation}
Let $\Sigma_{N_1+1}$ be a non-empty portion of $\partial \tilde{\Omega}^{(1)}_{N_1} =\partial \tilde{\Omega}^{(2)}_{N_1}$. Adapting the same arguments already utilised in this proof: \eqref{N-D maps coinciding}-\eqref{eq4.8}, Lemma \ref{determination of sigma near boundary} and Theorem \ref{Main theorem of uniqueness}, we have that 
\begin{equation}\label{claim 6 adapted}
\sigma^{(1)} = \sigma^{(2)} \quad \textnormal{and} \quad q^{(1)}= q^{(2)},  \quad \text{on $\Omega \backslash \overline{\tilde{\Omega}^{(1)}_{N_1}} =  \Omega \backslash \overline{\tilde{\Omega}^{(2)}_{N_1}}$.}
\end{equation}
Combining \eqref{claim 6 adapted} with Claim \ref{claim}, we have 
\[     \mathcal{N}^{\Sigma_{N_1+1}}_{\sigma^{(1)} , q^{(1)}} = \mathcal{N}^{\Sigma_{N_1+1}}_{\sigma^{(2)} , q^{(2)}},\]
whence
\begin{equation}\label{sigma K_1+2}
    \sigma_{{N_1}+1}^{(1)} = \sigma_{{N_1}+1}^{(2)} \quad \textnormal{and} \quad q_{{N_1}+1}^{(1)}= q_{{N_1}+1}^{(2)}.
\end{equation}
Furthermore,  from \eqref{sigma K_1}, \eqref{V K_1+1}, we have 
\[  \sigma_{{N_1}}^{(1)} = \sigma_{{N_1}+1}^{(1)} \quad \textnormal{and} \quad q_{{N_1}}^{(1)}= q_{{N_1}+1}^{(1)},\]
which, when combined with \eqref{known on D^2_K_1+1}, \eqref{sigma K_1+2}, leads to
\[  \sigma_{{N_1}}^{(2)} = \sigma_{{N_1}+1}^{(2)} \quad \textnormal{and} \quad q_{{N_1}}^{(2)}= q_{{N_1}+1}^{(2)},\]
with the latter contradicting again the \textit{jump condition} \eqref{jump condition 1}, hence $N_1 =N_2$.
\end{proof}

As in \cite{alessadrini2018eit}, Corollary \ref{Nested domain theorem} can be extended to a setting of a locally layered structure. If $\Sigma$ is the open portion of $\partial \Omega$ where measurements are taken as in Definition \ref{nonflatness} and $\varphi$ is the function in Definition \ref{function of class C^1 alpha}, we define a a subdomain of $\Omega$, 
\begin{equation}
    C = \{ x \in \mathbb{R}^n
 |\quad |x'| \leq R, \quad\varphi \leq x_n \leq M\},
 \end{equation}
for some positive constants $R, M$, and 
\begin{equation}
    \partial C \cap \partial\Omega = \{x \in \mathbb{R}^n |\quad |x'| \leq R, \quad x_n = \varphi (x')\} \supset \Sigma.
\end{equation}
Let $\varphi_1,\dots,\varphi_N$ : $B'_R \rightarrow \mathbb{R}$ be $C^{1, \alpha}$ functions satisfying definitions \ref{function of class C^1 alpha}, \ref{nonflatness} and 
\begin{equation}
    \varphi(x') \equiv \varphi_0(x') < \varphi_1(x') <\dots< \varphi_N(x')< M, \qquad \textnormal{for all $x' \in \overline{B'_R}$.}
\end{equation}
For $j = 1,\dots,N$, we denote the partitions of $C$
\begin{equation}
    D_j = \{ x \in C |\quad\varphi_{j-1}(x') < x_n < \varphi_j(x')\},
\end{equation}
  and assume that $\sigma \in L^\infty(\Omega, Sym_n)$ is a matrix valued function satisfying the uniform ellipticity condition \eqref{ellipticity condition 0}, $q\in L^\infty (\Omega)$ is a non-negative scalar-valued function in $\Omega$ such that $q>0$ on a subset of $\Omega$ of positive Lebesgue measure and that
 \begin{equation}
        \sigma(x) = \sum^N_{j=1} \sigma_j \chi_{D_j} (x), \qquad q(x) = \sum^N_{j=1} q_j \chi_{D_j} (x), \qquad x\in C.
    \end{equation}
Under the \textit{jump condition}
    \begin{equation}
         \quad \sigma_j \neq \sigma_{j+1}\quad\textnormal{or}\quad q_j \neq q_{j+1}, \quad \textnormal{for}\quad j=1,\dots,N-1,
    \end{equation}
we have the following uniqueness result localised within $C$.

\begin{corollary}\label{unique determination in q}
    $\mathcal{N}^\Sigma_{\sigma, q}$ uniquely determines $\sigma$ and $q$ within $C$. 
\end{corollary}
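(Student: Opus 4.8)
The plan is to run the layer-peeling argument of Sections~\ref{sec3}--\ref{sec5} and of the proof of Corollary~\ref{Nested domain theorem}, but carried out inside the cylinder $C$ and working inward from $\Sigma$ through the graph-layers $D_1,\dots,D_N$. I would take two coefficient pairs $(\sigma^{(i)},q^{(i)})$, $i=1,2$, of the prescribed form on $C$, with layers determined by non-flat $C^{1,\alpha}$ functions $\varphi=\varphi_0<\varphi_1^{(i)}<\dots<\varphi_{N_i}^{(i)}<M$ on $B'_R$, satisfying $\mathcal{N}^\Sigma_{\sigma^{(1)},q^{(1)}}=\mathcal{N}^\Sigma_{\sigma^{(2)},q^{(2)}}$, and aim to show $N_1=N_2=:N$, $\varphi_j^{(1)}\equiv\varphi_j^{(2)}$ and $(\sigma_j^{(1)},q_j^{(1)})=(\sigma_j^{(2)},q_j^{(2)})$ for $j=1,\dots,N$. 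For the base case I would argue exactly as in Sections~\ref{sec3}--\ref{sec4}: since $D_1^{(i)}$ meets $\Sigma$ along a non-flat $C^{1,\alpha}$ portion and $\sigma^{(i)},q^{(i)}$ are constant in a one-sided neighbourhood of $\Sigma$, Lemma~\ref{determination of sigma near boundary} gives $\sigma_1^{(1)}=\sigma_1^{(2)}$ and the singular-solution argument of Proposition~\ref{V on D1}, with pole tending to a point of $\Sigma$, gives $q_1^{(1)}=q_1^{(2)}$.

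Next I would set up the induction. Introduce the upper subgraphs $C_k^{(i)}=\{x\in C:\ x_n>\varphi_k^{(i)}(x')\}$, which take over the role of the nested domains $\Omega_k^{(i)}$ in Corollary~\ref{Nested domain theorem}, and assume $\varphi_j^{(1)}\equiv\varphi_j^{(2)}$ and $(\sigma_{j+1}^{(1)},q_{j+1}^{(1)})=(\sigma_{j+1}^{(2)},q_{j+1}^{(2)})$ for $j=1,\dots,k-1$, so that $\sigma^{(1)}=\sigma^{(2)}$, $q^{(1)}=q^{(2)}$ on $\{x\in C:\ x_n<\varphi_{k-1}(x')\}$. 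Arguing by contradiction, if $\varphi_k^{(1)}\not\equiv\varphi_k^{(2)}$ then (since both are $C^{1,\alpha}$, non-flat, and strictly ordered on some open subset of $B'_R$) one can select an open, non-flat $C^{1,\alpha}$ portion $\Sigma_{k+1}$ of the graph of one of them, say $\varphi_k^{(1)}$, lying strictly below the graph of $\varphi_k^{(2)}$; I would then take $E_k$ to be the connected component of $C\setminus\overline{C_k^{(1)}\cup C_k^{(2)}}$ whose boundary contains $\Sigma$, choose a Lipschitz subdomain $\mathcal{E}_k\subset E_k$ with $\Sigma\cup\Sigma_{k+1}\subset\partial\mathcal{E}_k$, set $\mathcal{F}_k=\Omega\setminus\overline{\mathcal{E}_k}$, and apply Claim~\ref{claim} with $\mathcal{E}_k$ in place of $D$ to propagate the equality of the maps to $\Sigma_{k+1}$, i.e. $\mathcal{N}^{\Sigma_{k+1}}_{\sigma^{(1)},q^{(1)}}=\mathcal{N}^{\Sigma_{k+1}}_{\sigma^{(2)},q^{(2)}}$ relative to $\mathcal{F}_k$. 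In $\mathcal{F}_k$ the layer adjacent to $\Sigma_{k+1}$ carries the constant pair $(\sigma_{k+1}^{(1)},q_{k+1}^{(1)})$ for the first coefficient and the single constant pair $(\sigma_k^{(2)},q_k^{(2)})$ for the second, so Lemma~\ref{determination of sigma near boundary} and Proposition~\ref{V on D1} force $\sigma_{k+1}^{(1)}=\sigma_k^{(2)}$ and $q_{k+1}^{(1)}=q_k^{(2)}$; combining with the equalities $\sigma_k^{(1)}=\sigma_k^{(2)}$, $q_k^{(1)}=q_k^{(2)}$ already known (the base case when $k=1$, the inductive hypothesis otherwise) gives $\sigma_{k+1}^{(1)}=\sigma_k^{(1)}$, $q_{k+1}^{(1)}=q_k^{(1)}$, contradicting the jump condition. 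Hence $\varphi_k^{(1)}\equiv\varphi_k^{(2)}$, and a further application of Claim~\ref{claim}, Lemma~\ref{determination of sigma near boundary} and Proposition~\ref{V on D1} across the now-common interface $\{x_n=\varphi_k(x')\}$ yields $(\sigma_{k+1}^{(1)},q_{k+1}^{(1)})=(\sigma_{k+1}^{(2)},q_{k+1}^{(2)})$, closing the induction. The equality $N_1=N_2$ would then follow exactly as at the end of the proof of Corollary~\ref{Nested domain theorem}: if, say, $N_2>N_1$, extend the first coefficient beyond its last interface by the constant pair $(\sigma_{N_1}^{(1)},q_{N_1}^{(1)})$ and run the step once more to obtain $(\sigma_{N_1+1}^{(2)},q_{N_1+1}^{(2)})=(\sigma_{N_1}^{(2)},q_{N_1}^{(2)})$, again contradicting the jump condition; this forces $N_1=N_2$ and thus $\sigma^{(1)}=\sigma^{(2)}$, $q^{(1)}=q^{(2)}$ in $C$.

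The hard part, as in the nested-domain case, will be the geometric bookkeeping rather than any new analytic input: one must check that $\Sigma_{k+1}$ can be chosen open, non-flat and $C^{1,\alpha}$ inside $C$, that $E_k$, $\mathcal{E}_k$ and $\mathcal{F}_k$ can be taken with Lipschitz boundary and $\Sigma\cup\Sigma_{k+1}\subset\partial\mathcal{E}_k$, and that near $\Sigma_{k+1}$ the domain $\mathcal{F}_k$ genuinely sees a single constant layer of each coefficient, so that the hypotheses of Lemma~\ref{determination of sigma near boundary} and Proposition~\ref{V on D1} are met. The only genuinely new feature compared with Corollary~\ref{Nested domain theorem} is that $C$ is a proper subdomain of $\Omega$: its lateral and top boundary are never used, every measurement and every surgery remains in a neighbourhood of $\overline{C}$, and the unknown coefficients on $\Omega\setminus\overline{C}$ never enter the argument---precisely because Claim~\ref{claim} holds for arbitrary $L^\infty$ coefficients on the complement of the peeled region.
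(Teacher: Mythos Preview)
Your proposal is correct and follows precisely the approach of the paper: a layer-peeling induction adapted from Corollary~\ref{Nested domain theorem}, using Claim~\ref{claim}, Lemma~\ref{determination of sigma near boundary} and Proposition~\ref{V on D1}, together with the jump condition to identify interfaces and deduce $N_1=N_2$. The paper's own proof is in fact much terser---it simply refers back to Corollary~\ref{Nested domain theorem} and records the explicit choice $\mathcal{E}_k=\{x:\ |x'|<R,\ \varphi(x')\leq x_n\leq\min\{\varphi_k^{(1)}(x'),\varphi_k^{(2)}(x')\}\}$, noting that the graph structure keeps all the surgery inside $C$---so your version spells out the same argument in greater detail.
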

\begin{proof}[Proof of Corollary \ref{unique determination in q}]
   $\;$ The proof that follows the same line of reasoning of Corollary \ref{Nested domain theorem}. Let $\sigma^{(i)}, q^{(i)}$ satisfy 
     \begin{equation}
        \sigma^{(i)}(x) = \sum^{N_i}_{j=1} \sigma^{(i)}_j \chi_{D^{(i)}_j} (x), \qquad q^{(i)}(x) = \sum^{N_i}_{j=1} q^{(i)}_j \chi_{D^{(i)}_j} (x), \qquad x\in \Omega, \quad i =1,2,
    \end{equation}
    satisfying the aforementioned conditions and the partition layers $D^{(i)}_j$ are described by the functions $\varphi_j^{(i)}$, $i=1,2$. We can select inner boundary portions $\Sigma_j$ such that they are all within $C$ due to the fact that the interfaces are graphs with respect to the same reference system. We define the sets $\mathcal{E}_k$ as 
    \[\mathcal{E}_k = \{ x \in \mathbb{R}^n \: : \: |x'| < R, \: \varphi\leq x_n \leq \min\{\varphi^{(1)}_k, \varphi^{(2)}_k\}\}.\]
    Hence,
    \[ \mathcal{N}^\Sigma_{\sigma^{(1)}, q^{(1)}} = \mathcal{N}^\Sigma_{\sigma^{(2)}, q^{(2)}}, \]
    leads to 
    \[ \sigma^{(1)} = \sigma^{(2)}\quad\textnormal{and}\quad q^{(1)} = q^{(2)},\qquad\textnormal{in}\quad C.\]
   \end{proof}

\section*{Acknowledgment}

This publication has emanated from research conducted with the financial support of Taighde \'Eireann - Research Ireland under Grant number GOIPG/2021/527. The authors would like to thank the Isaac Newton Institute for Mathematical Sciences, Cambridge, for support and hospitality during the programme Rich and Nonlinear Tomography - a multidisciplinary approach, where work on this paper was undertaken. This work was supported by EPSRC grant EP/R014604/1.

\bibliography{ref}

\end{document}